\documentclass[10pt]{amsart}

\usepackage{amsfonts}
\usepackage{mathrsfs}
\usepackage{amscd}
\usepackage{amsmath}
\usepackage{amssymb}
\usepackage{latexsym}
\usepackage{lscape}
\usepackage{xypic}
\usepackage{comment}
\usepackage{amscd}
\usepackage{wasysym}  
\usepackage{stmaryrd}  
\usepackage{tikz} 
\usetikzlibrary{matrix,arrows}
\usepackage{appendix}
\usepackage{geometry}
\usepackage[nice]{nicefrac}
\usepackage{dsfont}
\usepackage{tikz-cd} 

\usepackage[pagebackref,hyperindex,linktocpage=true]{hyperref}
\hypersetup{
    colorlinks,
    linkcolor={red!50!black},
    citecolor={blue!50!black},
    urlcolor={blue!80!black}
}

\usepackage{color}

\newtheorem{thm}{Theorem}[section]

\newtheorem{prop}[thm]{Proposition}
\newtheorem{lem}[thm]{Lemma}

\newtheorem{lem-def}[thm]{Lemma-Definition}
\newtheorem{cor}[thm]{Corollary}

\theoremstyle{definition}

\newtheorem{rmk}[thm]{Remark}

\numberwithin{equation}{section}

\newcommand{\nc}{\newcommand}

\nc{\on}{\operatorname}
\nc{\fraka}{{\mathfrak a}} \nc{\bba}{{\mathbf a}}
\nc{\frakb}{{\mathfrak b}}
\nc{\frakc}{{\mathfrak c}}
\nc{\frakd}{{\mathfrak d}}
\nc{\frake}{{\mathfrak e}}
\nc{\frakf}{{\mathfrak f}}
\nc{\frakg}{{\mathfrak g}}
\nc{\frakh}{{\mathfrak h}}
\nc{\fraki}{{\mathfrak i}}
\nc{\frakj}{{\mathfrak j}}
\nc{\frakk}{{\mathfrak k}}
\nc{\frakl}{{\mathfrak l}}
\nc{\frakm}{{\mathfrak m}}
\nc{\frakn}{{\mathfrak n}}
\nc{\frako}{{\mathfrak o}}
\nc{\frakp}{{\mathfrak p}}
\nc{\frakq}{{\mathfrak q}}
\nc{\frakr}{{\mathfrak r}}
\nc{\fraks}{{\mathfrak s}}
\nc{\frakt}{{\mathfrak t}}
\nc{\fraku}{{\mathfrak u}}
\nc{\frakv}{{\mathfrak v}}
\nc{\frakw}{{\mathfrak w}}
\nc{\frakx}{{\mathfrak x}}
\nc{\fraky}{{\mathfrak y}}
\nc{\frakz}{{\mathfrak z}}
\nc{\frakA}{{\mathfrak A}}
\nc{\frakB}{{\mathfrak B}}
\nc{\frakC}{{\mathfrak C}}
\nc{\frakD}{{\mathfrak D}}
\nc{\frakE}{{\mathfrak E}}
\nc{\frakF}{{\mathfrak F}}
\nc{\frakG}{{\mathfrak G}}
\nc{\frakH}{{\mathfrak H}}
\nc{\frakI}{{\mathfrak I}}
\nc{\frakJ}{{\mathfrak J}}
\nc{\frakK}{{\mathfrak K}}
\nc{\frakL}{{\mathfrak L}}
\nc{\frakM}{{\mathfrak M}}
\nc{\frakN}{{\mathfrak N}}
\nc{\frakO}{{\mathfrak O}}
\nc{\frakP}{{\mathfrak P}}
\nc{\frakQ}{{\mathfrak Q}}
\nc{\frakR}{{\mathfrak R}}
\nc{\frakS}{{\mathfrak S}}
\nc{\frakT}{{\mathfrak T}}
\nc{\frakU}{{\mathfrak U}}
\nc{\frakV}{{\mathfrak V}}
\nc{\frakW}{{\mathfrak W}}
\nc{\frakX}{{\mathfrak X}}
\nc{\frakY}{{\mathfrak Y}}
\nc{\frakZ}{{\mathfrak Z}}
\nc{\bbA}{{\mathbb A}}
\nc{\bbB}{{\mathbb B}}
\nc{\bbC}{{\mathbb C}}
\nc{\bbD}{{\mathbb D}}
\nc{\bbE}{{\mathbb E}}
\nc{\bbF}{{\mathbb F}} \nc{\bbf}{{\mathbf f}}
\nc{\bbG}{{\mathbb G}}
\nc{\bbH}{{\mathbb H}}
\nc{\bbI}{{\mathbb I}}
\nc{\bbJ}{{\mathbb J}}
\nc{\bbK}{{\mathbb K}}
\nc{\bbL}{{\mathbb L}}
\nc{\bbM}{{\mathbb M}}
\nc{\bbN}{{\mathbb N}}
\nc{\bbO}{{\mathbb O}}
\nc{\bbP}{{\mathbb P}}
\nc{\bbQ}{{\mathbb Q}}
\nc{\bbR}{{\mathbb R}}
\nc{\bbS}{{\mathbb S}}
\nc{\bbT}{{\mathbb T}}
\nc{\bbU}{{\mathbb U}}
\nc{\bbV}{{\mathbb V}}
\nc{\bbW}{{\mathbb W}}
\nc{\bbX}{{\mathbb X}}
\nc{\bbY}{{\mathbb Y}}
\nc{\bbZ}{{\mathbb Z}}
\nc{\calA}{{\mathcal A}}
\nc{\calB}{{\mathcal B}}
\nc{\calC}{{\mathcal C}}
\nc{\calD}{{\mathcal D}}
\nc{\calE}{{\mathcal E}}
\nc{\calF}{{\mathcal F}}
\nc{\calG}{{\mathcal G}}
\nc{\calH}{{\mathcal H}}
\nc{\calI}{{\mathcal I}}
\nc{\calJ}{{\mathcal J}}
\nc{\calK}{{\mathcal K}}
\nc{\calL}{{\mathcal L}}
\nc{\calM}{{\mathcal M}}
\nc{\calN}{{\mathcal N}}
\nc{\calO}{{\mathcal O}}
\nc{\calP}{{\mathcal P}}
\nc{\calQ}{{\mathcal Q}}
\nc{\calR}{{\mathcal R}}
\nc{\calS}{{\mathcal S}}
\nc{\calT}{{\mathcal T}}
\nc{\calU}{{\mathcal U}}
\nc{\calV}{{\mathcal V}}
\nc{\calW}{{\mathcal W}}
\nc{\calX}{{\mathcal X}}
\nc{\calY}{{\mathcal Y}}
\nc{\calZ}{{\mathcal Z}}

\nc{\scrA}{{\mathscr A}}
\nc{\scrB}{{\mathscr B}}
\nc{\scrR}{{\mathscr R}}

\nc{\bnu}{{\bar{ \nu}}}

\nc{\olO}{\bar{\calO}}

\nc{\al}{{\alpha}} 
\nc{\be}{{\beta}}
\nc{\ga}{{\gamma}} \nc{\Ga}{{\Gamma}}
 \nc{\hGa}{\widehat{\Gamma}}
\nc{\ve}{{\varepsilon}} 
\nc{\la}{{\lambda}} \nc{\La}{{\Lambda}}
\nc{\om}{\omega} \nc{\Om}{\Omega} 
\nc{\sig}{{\sigma}} \nc{\Sig}{{\Sigma}}

\nc{\tnb}{\psi_{\rm tame}}
\nc{\oM}{\overline{{M}}}
\nc{\op}{{\on{op}}}
\nc{\ad}{{\on{ad}}}
\nc{\alg}{{\on{alg}}}
\nc{\Ad}{{\on{Ad}}}
\nc{\Adm}{{\on{Adm}}} \nc{\aff}{{\on{af}}}
\nc{\Aut}{{\on{Aut}}}
\nc{\Bun}{{\on{Bun}}}
\nc{\cha}{{\on{char}}}
\nc{\der}{{\on{der}}}
\nc{\Der}{{\on{Der}}}
\nc{\diag}{{\on{diag}}}
\nc{\End}{{\on{End}}}
\nc{\Fl}{{\calF\!\ell}}
\nc{\Tr}{{\on{Transp}}}
\nc{\TR}{{\calT\!\calR}}
\nc{\Gal}{{\on{Gal}}}
\nc{\Gr}{{\on{Gr}}}
\nc{\rH}{{\on{H}}}
\nc{\Hom}{{\on{Hom}}}
\nc{\IC}{{\on{IC}}}
\nc{\id}{{\on{id}}}
\nc{\Id}{{\on{Id}}}
\nc{\ind}{{\on{ind}}}
\nc{\Ind}{{\on{Ind}}}
\nc{\Lie}{{\on{Lie}}}
\nc{\Pic}{{\on{Pic}}}
\nc{\pr}{{\on{pr}}}
\nc{\Res}{{\on{Res}}}
\nc{\res}{{\on{res}}} \nc{\Sat}{{\on{Sat}}}
\nc{\s}{{\on{sc}}}
\nc{\drv}{{\on{der}}}
\nc{\sgn}{{\on{sgn}}}
\nc{\Spec}{{\on{Spec}}}\nc{\Spf}{\on{Spf}} 
\nc{\Sph}{\on{Sph}}
\nc{\St}{{\on{St}}}
\nc{\tr}{{\on{tr}}}
\nc{\Mod}{{\mathrm{-Mod}}}
\nc{\Hilb}{{\on{Hilb}}} 
\nc{\Ext}{{\on{Ext}}} 
\nc{\vs}{{\on{Vec}}}
\nc{\ev}{{\on{ev}}}
\nc{\nO}{{\breve{\calO}}}
\nc{\tS}{{\tilde{S}}}
\nc{\spe}{{\on{sp}}}
\nc{\loc}{{\on{loc}}}

\nc{\nscrR}{{\mathscr{R}^{\on{nr}}}}

\nc{\GL}{{\on{GL}}}
\nc{\U}{{\on{U}}}
\nc{\Gl}{\on{Gl}} 
\nc{\GSp}{{\on{GSp}}}
\nc{\gl}{{\frakg\frakl}}
\nc{\SL}{{\on{SL}}} 
\nc{\SU}{{\on{SU}}} 
\nc{\SO}{{\on{SO}}}
\nc{\PGL}{{\on{PGL}}}

\nc{\Conv}{{\on{Conv}}}
\nc{\Rep}{{\on{Rep}}}
\nc{\Dom}{{\on{Dom}}}
\nc{\red}{{\on{red}}}
\nc{\act}{{\on{act}}}
\nc{\nr}{{\on{nr}}}
\nc{\ctf}{{\on{ctf}}}

\nc{\str}{{\on{-}}} 
\nc{\os}{{\bar{s}}}
\nc{\oeta}{{\bar{\eta}}}

\nc{\hookto}{\hookrightarrow}
\nc{\longto}{\longrightarrow}
\nc{\leftto}{\leftarrow}
\nc{\onto}{\twoheadrightarrow}
\nc{\lonto}{\twoheadleftarrow}

\nc{\uG}{{\underline{G}}}
\nc{\uA}{{\underline{A}}}
\nc{\uS}{{\underline{S}}}
\nc{\uT}{{\underline{T}}}
\nc{\uM}{{\underline{M}}}
\nc{\uP}{{\underline{P}}}
\nc{\uB}{{\underline{B}}}
\nc{\uN}{{\underline{N}}}

\nc{\ucG}{{\underline{\calG}}}
\nc{\ucA}{{\underline{\calA}}}
\nc{\ucS}{{\underline{\calS}}}
\nc{\ucT}{{\underline{\calT}}}
\nc{\ucM}{{\underline{\calM}}}
\nc{\ucP}{{\underline{\calP}}}
\nc{\ucN}{{\underline{\calN}}}

\nc{\bF}{{\breve{F}}}

\nc{\oFl}{{\overline{\Fl}}} 
\nc{\bU}{{\overline{U}}}
\nc{\tGr}{{\tilde{\Gr}}}
\nc{\cGr}{\calG\! r}
\nc{\oGr}{\overline{\on{Gr}}} 
\nc{\ocGr}{\overline{\calG\! r}}
\nc{\co}{{\colon}}
\nc{\sch}[1]{(Sch/{#1})}
\nc{\HypLoc}[1]{HypLoc({#1})}

\nc{\ohtimes}{\stackrel{!}{\otimes}}
\nc{\boxtilde}{\widetilde{\boxtimes}}
\nc{\vstar}{{\varhexstar}}

\nc{\Div}{\on{Div}}

\nc{\bslash}{\backslash}
\nc{\algQl}{{\bar{\bbQ}_\ell}}
\nc{\sF}{{\bar{F}}}
\nc{\nF}{{\breve{F}}}
\nc{\nW}{{W^{\on{nr}}}}
\nc{\sk}{{\bar{k}}}
\nc{\cont}{\on{c}}
\nc{\Supp}{\on{Supp}}
\nc{\blt}{\bullet}  
\nc{\dom}{\on{dom}}
\nc{\scon}{{\on{sc}}} 
\nc{\Affine}{\on{Aff}} 
\nc{\nscrA}{\mathscr{A}^{\on{nr}}} 
\nc{\nfraka}{{\bbf^{\on{nr}}}}
\nc{\ran}{{\rangle}}
\nc{\lan}{{\langle}}
\nc{\bk}{{\bar{k}}}
\nc{\tF}{{\tilde{F}}}
\nc{\sS}{{\bar{S}}}
\nc{\LG}{{^\text{L}\hspace{-0.04cm}G}}
\nc{\LL}{{^\text{L}\hspace{-0.07cm}L}}

\nc{\pot}[1]{ [\hspace{-0,5mm}[ {#1} ]\hspace{-0,5mm}] }
\nc{\rpot}[1]{ (\hspace{-0,7mm}( {#1} )\hspace{-0,7mm}) }

\nc{\defined}{\hspace{0.1cm}\stackrel{\text{\tiny \rm def}}{=}\hspace{0.1cm}}

\topmargin-0.5cm \textheight23cm \oddsidemargin 1cm \textwidth15.2cm

\begin{document}

\title[Semisimple local Langlands parameters and parahoric Satake parameters]{Compatibility of semisimple local Langlands parameters with parahoric Satake parameters}
\author{Qihang Li}

\address{Department of Mathematics, University of Maryland, College Park, MD 20742-4015, DC, USA}
\email{mathlqh@umd.edu}

\thanks{Research of Q.L. is partially supported by a Hauptman Summer Fellowship (2021) at University of Maryland.}

\maketitle

\begin{abstract}
In this paper, we prove that there is at most one correspondence between parahoric-spherical representations and semisimple local Langlands parameters which satisfies certain natural properties. Our proof of this uniqueness statement is very formal. In particular, the semisimple local Langlands parameters constructed in \cite{FS21} yield the unique candidate for such representations. As a corollary, we prove the conjecture \cite[Conjecture 13.1]{Hai15} which posits the compatibility of semisimple local Langlands parameters with parahoric Satake parameters.
\end{abstract}

\setcounter{tocdepth}{1}
\tableofcontents
\setcounter{section}{0}

\thispagestyle{empty}

\section{Introduction}

\subsection{Formulation of the main result} Let $F$ be a nonarchimedean local field with residue field $k_F$ of cardinality $q=p^m$. The ring of integers of $F$ is denoted by $\mathcal{O}_F$. Let $\sF/F$ be a separable closure. We denote by $\Ga_F$ the Galois group with inertia subgroup $I_F$ and by $W_F$ the Weil group of $F$. We fix a geometric Frobenius lift $\Phi_F\in\ W_F$. For $G$ a connected reductive group defined over $F$, the Langlands dual group $\widehat{G}$ of $G$ considered in this paper is always defined over the complex number field $\mathbb{C}$. Since the Langlands parameters constructed in \cite{FS21} are considered to be with $\ell$-adic coefficients, we fix an isomorphism $\mathbb{C} \cong \overline{\mathbb{Q}}_\ell$ throughout this paper to make everything work over $\mathbb{C}$. By ${^L\! G \ := \widehat{G} \rtimes W_F}$ we mean the algebraic $L$-group of $G$.

Let $\Pi(G/F)$ be the set consisting of smooth irreducible representations of $G(F)$ up to isomorphism and let $\Phi(G/F)$ be the set consisting of admissible homomorphisms $W_F'\rightarrow$ ${^L\! G}$ up to $\widehat{G}$-conjugacy. Here $W_F' \overset{\underset{\mathrm{def}}{}}{=} W_F \ltimes \mathbb{C}$ is the Weil-Deligne group of $F$.

A version of local Langlands correspondence is to construct a finite-to-one map $LLC_G:\Pi(G/F) \rightarrow \Phi(G/F)$ satisfying some prescribed properties.

In \cite{FS21}, Fargues and Scholze construct a solution to the semisimplified version of the above question. More specifically, let $\Theta(G/F)$ be the set of smooth (or, continuous) homomorphisms $W_F\rightarrow $ ${^L\! G}$ with semisimple image up to $\widehat{G}$-conjugacy (elements in this set are called sesmisimple local Langlands parameters in this paper). A correspondence $LLC_G^{FS}:\Pi(G/F)\rightarrow\Theta(G/F)$ (any map of this form will be called semisimple local Langlands correspondence in this paper) is constructed in \cite{FS21} and it satisfies properties listed in \cite[Theorem I.9.6]{FS21}.

A natural question to ask is if $LLC_G^{FS}:\pi\mapsto\varphi_{\pi}^{FS}$ agrees with previous constructions given by other people which also provide full or partial candidates for the local Langlands correspondence. For $G=GL_n$, this has been done in \cite[Theorem I.9.6]{FS21}. Furthermore, when $F$ is a $p$-adic field, the case of inner forms of $GL_n$ was settled in \cite{HKW22}, and the case of inner forms of $GSp_4$ (with some assumptions on $F$) was done in \cite{Ham21}. More recently, results for some unitary groups also have been obtained in \cite{HMN22}.

Solutions to the question about comparing local Langlands correspondence mentioned above all started from some given group $G$ and then compared the images of all possible smooth irreducible representations of $G(F)$ under a priori different correspondences. In this paper, we approach the question from a slightly different direction. We deal with all connected reductive groups $G$ simultaneously, but we focus only on a proper subset of $\Pi(G/F)$.
To be more precise, we are interested in smooth irreducible representations $\pi$ of $G(F)$ with non-trivial parahoric fixed vectors (representations of this kind are called parahoric-spherical representations in this paper). In other words, any representation $\pi$ in this subset satisfies the condition that $\pi^J \neq 0$ for some parahoric subgroup $J \subseteq G(F)$. One special property (or equivalent definition) of such representations is that they are irreducible subquotients of parabolic inductions of weakly unramified characters of some minimal Levi subgroup. A parahoric Satake parameter $s(\pi)$ is attached to every representation $\pi$ of this kind in \cite{Hai15}, so a natural question is to compare $s(\pi)$ and $\varphi_\pi^{FS}(\Phi_F)$, which is the conjecture \cite[Conjecture 13.1]{Hai15}. This will be proved in our Corollary B. Firstly we will prove the following theorem, which says that for smooth irreducible representations with non-trivial parahoric fixed vectors, there is at most one semisimple local Langlands correspondence (the existence is guaranteed by $\varphi^{FS}_{\pi}$) if the correspondence satisfies some formal properties contained in the list of \cite[Theorem I.9.6]{FS21} (we need a stronger version for the last property in the list). This is our Main Theorem. Note that since we have to deal with Weil restriction of scalars, we also need to consider finite separable extensions of $F$.

\bigskip

\noindent\textbf{Main Theorem.}
{\it  Consider a family of maps $\pi \mapsto \varphi_{\pi}^{ss}$ from parahoric-spherical representations of $G(E)$ \textup{(}for varying finite separable extensions $E/F$ and varying connected reductive groups $G$ over $E$\textup{)} to semisimple local Langlands parameters, satisfying the following properties:

\noindent \textup{(i)} For $G=T$ is a torus, $\pi \mapsto \varphi_{\pi}^{ss}$  is the usual local Langlands correspondence for tori.

\noindent \textup{(ii)} The correspondence $\pi \mapsto \varphi_{\pi}^{ss}$  is compatible with twists by weakly unramified characters. We note that for this property, we are only considering characters of $(G/G_{der})(E)$ and this is the setting in which the property is proved at the end of \cite[IX.6.4]{FS21}. Let $\pi$ be an irreducible smooth representation of $G(E)$ and $\chi$ be a weakly unramified character of $(G/G_{der})(E)$ which we also view as a character of $G(E)$. We denote by $\pi\otimes\chi$ the irreducible smooth representation of $G(E)$ obtained by twisting $\pi$ by $\chi$. This compatibility means that the semisimple Langlands parameter $\varphi_{\pi\otimes\chi}^{ss}$ is the same as $(\varphi_{\pi}^{ss},\varphi_{\chi}^{ss})': W_E\rightarrow \! ^L\! (G\times (G/G_{der}))$ via the map $\! ^L\! (G\times (G/G_{der})) \rightarrow \! ^L\! G$ induced from the diagonal map $G\rightarrow G\times (G/G_{der})$.

\noindent \textup{(iii)} If $G'\rightarrow G$ is a map of reductive groups inducing an isomorphism of adjoint groups, $\pi$ is an irreducible smooth representations of $G(E)$ and $\pi'$ is an irreducible subquotient of $\pi|_{G'(E)}$, then $\varphi_{\pi'}^{ss}$ is given by the composition of the induced map between the $L$-groups $^L\! G\rightarrow $ $^L\! G'$ and $\varphi_{\pi}^{ss}$.

\noindent \textup{(iv)} The correspondence $\pi \mapsto \varphi_{\pi}^{ss}$ is compatible with direct product. Let $G=G1\times G2$ be a product of two groups over $E$. Let $\pi_i$ be an irreducible smooth representation of $G_i(E)$ and denote by $\pi_1 \otimes \pi_2$ the corresponding irreducible smooth representation of $G(E)$. This compatibility means that the semisimple Langlands paramter $\varphi_{\pi_1 \otimes \pi_2}^{ss}$ is the same as $(\varphi_{\pi_1}^{ss}.\varphi_{\pi_2}^{ss}): W_E\rightarrow \!^L\! G_1\times \!^L\! G_2$ via the natural map $\!^L\! G\rightarrow \!^L\! G_1\times \!^L\! G_2$.

\noindent \textup{(v)} The correspondence $\pi \mapsto \varphi_{\pi}^{ss}$ is compatible with Weil restriction of scalars. Let $G= Res_{E'/E}G'$ be the Weil restriction of scalars of a reductive group $G'$ over some finite separable extension $E'/E$. Let $\pi$ be an irreducible smooth representation of $G(E)$ and denote by $\pi'$ the corresponding irreducible smooth representation of $G'(E')$. This compatibility means that $\varphi_{\pi}^{ss}$ and $\varphi_{\pi'}^{ss}$ are the same via the canonical identification $Z^1(W_E,\widehat{G})\cong Z^1(W_{E'},\widehat{G'})$.

\noindent \textup{(vi)} The correspondence $\pi \mapsto \varphi_{\pi}^{ss}$ is compatible with parabolic induction. Let $P$ be a parabolic subgroup of $G$ with Levi component $M$. Let $\sigma$ be an irreducible smooth representation of $M(E)$. This compatibility means that for any irreducible subquotient $\pi$ of the normalized parabolic induction $\textup{Ind}_P^G\sigma$, its semisimple local Langlands parameter $\varphi_\pi^{ss}$ is given by the composition $\varphi_{\sigma}^{ss}$ with the $L$-embedding $^L\! M \hookrightarrow ^L\! G $. It is worth mentioning that the proof in \cite[IX.7.2]{FS21} deals with unnormalized parabolic induction and on the Galois side the cyclotomic twist is involved when they consider the embedding $^L\! M \hookrightarrow ^L\! G $, which implies the results for the normalized parabolic inductions.

Then the family of maps $\pi \mapsto \varphi_{\pi}^{ss}$ is uniquely determined by its values on the trivial character of $PGL_{1,D_E}$ for all finite separable extensions $E/F$ and finite dimensional $E$-central division algebras  $D_E$. In particular, there is at most one family of correspondences $\pi \mapsto \varphi_{\pi}^{ss}$ which satisfies the above properties along with property \textup{(vii)} below.

\noindent \textup{(vii)} Let $G$ be any inner form of $GL_n$, the correspondence $\pi \mapsto \varphi_{\pi}^{ss}$ agrees with the usual \textup{(}semisimplified\textup{)} local Langlands correspondence for $G$.

Furthermore, the same result holds if we replace $\varphi_{\pi}^{ss}$ with its value at $\Phi_E$ and we require that $\varphi_{\pi}^{ss}(\Phi_E)\in [\widehat{G}^{I_E}\rtimes \Phi_E]_{\textup{ss}}/\widehat{G}^{I_E}$ where \textup{ }$\textup{ss}$\textup{ } means the subset consisting of semisimple elements.
}

\bigskip

Some of the assumptions in the Main Theorem can be weakened (see Remark \ref{weaken}). When $F$ is a $p$-adic field (a finite extension of $\mathbb{Q}_p$), the semisimple local Langlands correspondence $LLC_G^{FS}$constructed in \cite{FS21} satisfies all the properties in the Main Theorem, and actually that is why we are considering these properties here. Properties (i)-(vi) have been verified in \cite[Theorem I.9.6]{FS21}. In the case $G=GL_n$, property (vii) is also verfied in the same theorem. For general inner forms of $GL_n$, it is proved in \cite[Theorem 6.6.1]{HKW22} (this is the only place where we need to assume that $F$ is a $p$-adic field). Now applying the Main Theorem to $\pi \mapsto \varphi_{\pi}^{FS}$, we get the following corollary.

\bigskip

\noindent\textbf{Corollary A.} {\it Assume that $F$ is a $p$-adic field. For smooth irreducible representations $\pi$ with non-trivial parahoric fixed vectors, the semisimple local Langlands correspondence $\pi\mapsto \varphi_{\pi}^{FS}$ constructed in \cite{FS21} is the unique one satisfying all the properties \textup{(}property \textup{(vii)} included\textup{)} mentioned in the Main Theorem.
}
\bigskip

Once we are able to check that $\pi\mapsto s(\pi)$ also satisfies all the properties mentioned in the Main Theorem (we will do this in \textsection 5 and there is one more extra step), we can obtain the following corollary in a similar way, proving the conjecture \cite[Conjecture 13.1]{Hai15}.

\bigskip

\noindent\textbf{Corollary B.} {\it Assume that $F$ is a $p$-adic field. For any smooth irreducible representation $\pi$ with non-trivial parahoric fixed vectors, the value of $\varphi_{\pi}^{FS}$ at $\Phi_F$ agrees with the parahoric Satake parameter $s(\pi)$ constructed in \cite{Hai15}.
}

\subsection{Overview} In \textsection{2}, we recall the construction of $s(\pi)$ and set up notions for later sections, then we prove Corollary B in the case of quasi-split groups. In \textsection 3, by using the same strategy as in \textsection 2, we identify Genestier-Lafforgue parameters with Fargues-Scholze parameters for principal series representations of quasi-split groups. In \textsection{4}, we give a proof of the Main Theorem by using several reductions. In \textsection{5}, we deal with Corollary B for general $G$ and we also prove some results about $\varphi^{FS}_{\pi}(I_F)$.

\subsection{Acknowledgements} It is my pleasure to thank my advisor Thomas J. Haines for giving me this project, for several useful discussions about it and also for helpful comments on the preliminary version. I also really appreciate the kind suggestions given by very responsible reviewer and editor who make this paper much more understandable. My research is partially supported by a Hauptman Summer Fellowship (2021) at University of Maryland and I thank the donor Carol Fullerton for her kind support sincerely.

\subsection{Conventions on Parahoric subgroups and Notations} We use the same conventions on parahoric subgroups as in \cite{Hai15} (which are also the same as the conventions in \cite{BT84}). Parahoric subgroups are understood to be the $\mathcal{O}_F$-points of a \textbf{connected} group scheme over $\mathcal{O}_F$. For a connected reductive group $G$ defined over $F$, we denote by $G_{der}$ its algebraic derived group, by $G_{ab}$ its maximal quotient torus $G/G_{der}$, by $Z(G)$ its center, and by $G_{ad}$ its adjoint group. We denote by $D$ some finite dimensional central division algebra over some field which should be clear from the context. The algebraic group $GL_{1,D}$ is understood to be the inner from of $GL_n$ ($n$ is the index of $D$) defined by units of $D$ and $SL_{1,D} \subseteq GL_{1,D}$ is defined to be the subgroup of $GL_{1,D}$ of reduced norm $1$. We also denote by $PGL_{1,D}$ the adjoint group of $GL_{1,D}$ and $SL_{1,D}$. By $F^{un}$ we mean the maximal unramified extension of $F$, and $\breve{F}$ is the completion of $F^{un}$. All parabolic inductions used in this paper are normalized. 

 \section{The compatibility with parahoric Satake parameters: Quasi-split case}\label{Compatibility_quasisplit}
 Let $\pi$ be a smooth irreducible representation of $G(F)$ with non-trivial parahoric fixed vectors. We briefly recall how the parahoric Satake parameter $s(\pi)$ is constructed in \cite{Hai15}. For general $G$, according to \cite[\textsection 11.5]{Hai14}, the supercuspidal support of $\pi$ is a pair $(M,\chi)_G$ with $M$ being a minimal $F$-Levi subgroup of $G$ (equivalently $M$ is the centralizer of some maximal $F$-split torus $A\subseteq G$) and $\chi$ is a weakly unramified character of $M(F)$. Here weakly unramified characters are understood to be those characters vanishing on $M(F)_1$ which is the intersection of the kernel of the Kottwitz homomorphism $\kappa_M$ \cite[\textsection 7]{Ko97} and $M(F)$. One gets an isomorphism via the Kottwitz homomorphism
 $$
 \kappa_M:M(F)/M(F)_1\cong X^*(Z(\widehat{M})^{I_F}_{\Phi_F})
 $$
 Since $\chi\in \Hom_{grp}(M(F)/M(F)_1,\mathbb{C}^{\times})$, it can be viewed naturally as an element in $(Z(\widehat{M})^{I_F})_{\Phi_F}$. When $G$ is quasi-split (equivalently when $M=T$ is a maximal torus of $G$), the parahoric Satake parameter $s(\pi)$ is defined to be the image of the corresponding element under the map 
 $$
 (\widehat{T}^{I_F})_{\Phi_F}\rightarrow (\widehat{T}^{I_F})_{\Phi_F}/W(G,A)\rightarrow {{^L\! G}}/\widehat{G}
 $$
 Here $W(G,A)$ is the relative Weyl group and one can show that $s(\pi)$ is independent of the choice of $(M,\chi)$. When $G$ is not quasi-split, the construction of $s(\pi)$ is more complicated and we will review it in \textsection{5}.
 
 We firstly deal with the case when $G=T$ is a torus. For that, we need a lemma which is implicitly used in \cite{Hai15}.
 \begin{lem} \label{tori_lemma}
Denote by $\pi \mapsto \varphi_{\pi}$ the usual local Langlands correspondence for tori. If $G=T$ is a torus, then $s(\pi)=\varphi_{\pi}(\Phi_F)$.
 \end{lem}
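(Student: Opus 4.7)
First, for a torus $T$ the unique parahoric subgroup of $T(F)$ is $T(F)_1$, so $\pi$ having nontrivial parahoric fixed vectors means that $\pi=\chi$ is a smooth character of $T(F)$ trivial on $T(F)_1$, i.e.\ weakly unramified. Under the Kottwitz isomorphism $\kappa_T\co T(F)/T(F)_1\xrightarrow{\sim}X^*(\widehat{T}^{I_F}_{\Phi_F})$, the character $\chi$ corresponds via Pontryagin duality to a unique element $s_\chi\in\widehat{T}^{I_F}_{\Phi_F}$. Because the relative Weyl group $W(G,A)$ is trivial for $G=T$, the parahoric Satake parameter $s(\pi)$ is, by definition, the image of $s_\chi$ under the natural map $\widehat{T}^{I_F}_{\Phi_F}\to {^L\! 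T}/\widehat{T}$.

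Next, on the Galois side, the LLC for tori gives a canonical isomorphism $\Hom_{\text{cts}}(T(F),\mathbb{C}^\times)\cong H^1_{\text{cts}}(W_F,\widehat{T})$; let $\varphi_\chi\co W_F\to{^L\! T}$ be the parameter attached to $\chi$. I would verify that $\chi$ is trivial on $T(F)_1$ if and only if $[\varphi_\chi]$ lies in the image of the inflation
\[
H^1(W_F/I_F,\widehat{T}^{I_F})\hookrightarrow H^1(W_F,\widehat{T}),
\]
and that, using $W_F/I_F\cong\mathbb{Z}$ generated by $\Phi_F$, the source is identified with $\widehat{T}^{I_F}_{\Phi_F}$ via evaluation at $\Phi_F$. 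Under this identification one can choose a representative cocycle with $\varphi_\chi(I_F)=1$ and $\varphi_\chi(\Phi_F)=(t,\Phi_F)$ for some $t\in\widehat{T}^{I_F}$, and then the image of $\varphi_\chi(\Phi_F)$ in ${^L\! T}/\widehat{T}$ is precisely the class of $t$ in $\widehat{T}^{I_F}_{\Phi_F}$.

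The crux is the compatibility between the two identifications of weakly unramified characters of $T(F)$ with $\widehat{T}^{I_F}_{\Phi_F}$: the one obtained via $\kappa_T$ and Pontryagin duality, and the one obtained via the LLC for tori followed by evaluation at $\Phi_F$. This is a well-known feature of Langlands' construction (essentially the content of local Tate duality for tori restricted to the weakly unramified subgroup), and I would deduce it either by unwinding the explicit cocycle description of the LLC for a torus or by citing its appearance in \cite{Hai15}/\cite{Hai14}. Granted this compatibility, $t$ and $s_\chi$ define the same class in $\widehat{T}^{I_F}_{\Phi_F}$, so $\varphi_\pi(\Phi_F)$ and $s(\pi)$ coincide in ${^L\! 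T}/\widehat{T}$, proving the lemma. The main obstacle is precisely this compatibility statement; everything else is a formal unwinding of definitions.
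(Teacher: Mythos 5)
Your proposal is correct and follows essentially the same route as the paper: both reduce the lemma to the compatibility of Langlands duality for quasicharacters with the Kottwitz homomorphism, identifying weakly unramified characters with $H^1(\langle\Phi_F\rangle,\widehat{T}^{I_F})\cong(\widehat{T}^{I_F})_{\Phi_F}$ via evaluation at $\Phi_F$, and the paper likewise cites this compatibility (to \cite[\textsection3.3.1]{Hai14}) rather than reproving it. The only point the paper adds is the remark that both identifications must use the same normalization of local class field theory (geometric Frobenius $\leftrightarrow$ uniformizer), which is worth stating explicitly.
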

 
 \begin{proof}
This follows from the compatibility of Langlands duality for quasicharacters with the Kottwitz homomorphism. In the case of tori, Langlands duality for quasicharacters is the following isomorphism
$$
\Hom_{cont}(T(F),\mathbb{C}^\times)\cong H^1(W_F,\widehat{T})
$$
Under this isomorphism, the set of weakly unramified characters $\Hom_{grp}(T(F)/T(F)_1,\mathbb{C}^{\times})\subseteq \Hom_{cont}(T(F),\mathbb{C}^\times)$ is identified with $H^1(\langle \Phi_F \rangle,\widehat{T}^{I_F})\subseteq  H^1(W_F,\widehat{T})$ (viewed as a subset via the inflation map), so we obtain
$$
\Hom_{grp}(T(F)/T(F)_1,\mathbb{C}^{\times}) \cong H^1(\langle \Phi_F \rangle,\widehat{T}^{I_F})
$$
Now the right hand side is naturally identified with $(\widehat{T}^{I_F})_{\Phi_F}$ by considering the image of $\Phi_F$ under any 1-cocycle representative. Finally we get
$$
\Hom_{grp}(T(F)/T(F)_1,\mathbb{C}^{\times}) \cong (\widehat{T}^{I_F})_{\Phi_F}.
$$
We notice that the Kottwitz homomorphism has the same source and target, and actually it turns out that they agree in this case. For more discussions about this, see \cite[\textsection3.3.1]{Hai14} and also the reference used there. We also note that our normalization of local class field theory sends a uniformizer in $F^\times$ to the image of $\Phi_F$ in $W^{ab}_F$ (the abelianization of $W_F$), so both the Kottwitz homomorphism and Langlands duality for quasicharacters should be normalized in this way as well.
 \end{proof}
 
 \begin{thm} \label{tori_thm}
 If $G=T$ is a torus, then $s(\pi)=\varphi_{\pi}^{FS}(\Phi_F)$.
 \end{thm}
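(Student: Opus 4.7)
The plan is to reduce this immediately to Lemma \ref{tori_lemma} combined with the known compatibility of the Fargues--Scholze correspondence with the classical local Langlands correspondence for tori. There is almost no new content to establish; the real work has already been done in Lemma \ref{tori_lemma}, which translates $s(\pi)$ into the language of the usual local Langlands correspondence $\pi \mapsto \varphi_\pi$ for tori, evaluated at $\Phi_F$.

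Concretely, the first step is to apply Lemma \ref{tori_lemma} to obtain the identity $s(\pi) = \varphi_{\pi}(\Phi_F)$, where $\varphi_{\pi}$ denotes the parameter assigned to $\pi$ by the standard local Langlands correspondence for tori. The second step is to invoke \cite[Theorem I.9.6]{FS21}, which asserts (among other things) that for $G = T$ a torus the Fargues--Scholze construction $\pi \mapsto \varphi_\pi^{FS}$ coincides with the classical local Langlands correspondence for tori; in particular $\varphi_\pi^{FS} = \varphi_\pi$ as elements of $H^1(W_F,\widehat{T})$. Evaluating at $\Phi_F$ and combining with the first step then gives $s(\pi) = \varphi_\pi(\Phi_F) = \varphi_\pi^{FS}(\Phi_F)$, which is the desired equality.

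Since both identifications used are pure recollections rather than new computations, there is no real obstacle. The only subtlety worth double-checking is that the normalizations of local class field theory used in \cite{FS21} and in Lemma \ref{tori_lemma} agree: both send a uniformizer to the geometric Frobenius $\Phi_F$ in $W_F^{\mathrm{ab}}$, as noted at the end of the proof of Lemma \ref{tori_lemma}, so the two parameters being compared really are attached to the same weakly unramified character via the same dictionary. With this normalization check in place the theorem follows in one line.
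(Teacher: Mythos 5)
Your proof is correct and follows the exact same two-step argument as the paper: apply Lemma \ref{tori_lemma} to rewrite $s(\pi)$ as $\varphi_\pi(\Phi_F)$, then invoke the fact from \cite[Theorem I.9.6]{FS21} that the Fargues--Scholze correspondence agrees with the classical one for tori. The extra remark about checking the normalization of local class field theory is a sensible precaution but is already addressed in the paper's proof of Lemma \ref{tori_lemma}.
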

 
 \begin{proof}
For tori, $\pi \mapsto \varphi_{\pi}^{FS}$ is the usual local Langlands correspondence. Now by Lemma \ref{tori_lemma}, we know that $s(\pi)=\varphi_{\pi}(\Phi_F)=\varphi_{\pi}^{FS}(\Phi_F)$.
 \end{proof}
 
\begin{cor}
For any quasi-split $G$, $s(\pi)=\varphi_{\pi}^{FS}(\Phi_F)$.
\end{cor}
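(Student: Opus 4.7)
The plan is to reduce the quasi-split case to the torus case (Theorem \ref{tori_thm}) via compatibility of the Fargues--Scholze correspondence with parabolic induction, following exactly the strategy suggested by the definition of $s(\pi)$ in the quasi-split setting.

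First I would fix a representation $\pi$ of $G(F)$ with non-trivial parahoric fixed vectors and invoke \cite[\textsection 11.5]{Hai14} to write its supercuspidal support as $(M,\chi)_G$ with $M$ a minimal $F$-Levi of $G$. Since $G$ is quasi-split, $M=T$ is a maximal torus, and $\chi$ is a weakly unramified character of $T(F)$. Consequently $\pi$ is an irreducible subquotient of the normalized parabolic induction $i_B^G(\chi)$ for some Borel $B$ containing $T$.

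Next I would apply compatibility of $\pi\mapsto\varphi_{\pi}^{FS}$ with parabolic induction (property (xi) of \cite[Theorem I.9.6]{FS21}, which is the analogue of property (vi) of the Main Theorem for Fargues--Scholze parameters): this gives
\[
\varphi_{\pi}^{FS} \;=\; \bigl(\,{^L T}\hookrightarrow {^L G}\,\bigr)\circ \varphi_{\chi}^{FS},
\]
so that $\varphi_{\pi}^{FS}(\Phi_F)$ is the image of $\varphi_{\chi}^{FS}(\Phi_F)\in {^L T}$ under the canonical map ${^L T}\to {^L G}/\widehat{G}$. By Theorem \ref{tori_thm} applied to the torus $T$, we have $\varphi_{\chi}^{FS}(\Phi_F)=s(\chi)$, where $s(\chi)$ is the class in $(\widehat{T}^{I_F})_{\Phi_F}$ attached to $\chi$ by the Kottwitz isomorphism. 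By the definition of the parahoric Satake parameter recalled above, the image of $s(\chi)$ in ${^L G}/\widehat{G}$ is precisely $s(\pi)$. Combining these identifications yields $s(\pi)=\varphi_{\pi}^{FS}(\Phi_F)$.

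The only point requiring care is that $s(\pi)$ is \emph{a priori} independent of the choice of $(T,\chi)$ in the supercuspidal support, but this is already established in the definition, and on the Fargues--Scholze side the same ambiguity collapses because $\varphi_{\pi}^{FS}$ is well-defined up to $\widehat{G}$-conjugacy and is constant on subquotients of a given parabolic induction. Thus I expect no genuine obstacle: the corollary is a direct amalgam of Theorem \ref{tori_thm}, the definition of $s(\pi)$ in the quasi-split case, and the parabolic induction property of Fargues--Scholze parameters.
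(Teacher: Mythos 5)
Your proposal is correct and follows essentially the same route as the paper: reduce to the supercuspidal support $(T,\chi)$ with $T$ a maximal torus (since $G$ is quasi-split), invoke parabolic-induction compatibility on both sides (for $s(\pi)$ by definition, for $\varphi^{FS}_\pi$ by the Fargues--Scholze results), and then apply Theorem \ref{tori_thm}. Your write-up simply spells out the steps the paper compresses into two sentences.
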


\begin{proof}
When $\pi$ has non-trivial parahoric fixed vectors, its cuspidal support is a pair $(M,\chi)$ where $M$ is a minimal $F$-Levi subgroup of $G$. Since both $\pi \mapsto \varphi_{\pi}^{FS}$ and $\pi \mapsto s(\pi)$ are compatible with parabolic induction (for $s(\pi)$, this is the definition), we only need to consider the result for $M$ which is a torus in this case, then the result follows from Theorem \ref{tori_thm}.
\end{proof}

\section{A Simple Version of the Main theorem: Quasi-split case }

When $F$ is of equal characteristic, Genestier and Lafforgue constructed a semisimple local Langlands correspondence in \cite{GL18} which we denote by $\pi \mapsto \varphi_{\pi}^{GL}$. The construction $\pi \mapsto \varphi_{\pi}^{FS}$ also works in this case, so a natural question is to compare $\varphi_{\pi}^{GL}$ with $\varphi_{\pi}^{FS}$. Following the same strategy in \textsection 2, we get the following theorem.

\begin{thm} \label{thm_quasisplit}
Consider a family of maps $\pi \mapsto \varphi_{\pi}^{ss}$ from parahoric-spherical representations of $G(F)$ \textup{(}for varying \textbf{quasi-split} connected reductive groups $G$ over $F$\textup{)} to semisimple local Langlands parameters. There is at most one family of correspondences which satisfies properties below.

\noindent \textup{(i)} For $G=T$ is a torus, $\pi \mapsto \varphi_{\pi}^{ss}$  is the usual local Langlands correspondence.

\noindent \textup{(ii)} The correspondence $\pi \mapsto \varphi_{\pi}^{ss}$ is compatible with parabolic induction.

\end{thm}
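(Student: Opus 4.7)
The plan is to imitate exactly the strategy used in \textsection 2 for $s(\pi)$, but applied to an abstract correspondence $\pi\mapsto\varphi^{ss}_\pi$ satisfying (i) and (ii). The key structural input is the description of supercuspidal supports of parahoric-spherical representations recalled after Lemma \ref{tori_lemma}: if $G$ is quasi-split and $\pi$ is a parahoric-spherical irreducible representation of $G(F)$, then a minimal $F$-Levi is a maximal torus $T$, so the cuspidal support of $\pi$ is a pair $(T,\chi)_G$ where $\chi$ is a weakly unramified character of $T(F)$. In particular, $\pi$ is an irreducible subquotient of the normalized parabolic induction $\on{Ind}_B^G(\chi)$ for any Borel $B\supseteq T$.

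Given this, the uniqueness is almost immediate. By property (ii), the parameter $\varphi_\pi^{ss}$ must equal the composition of $\varphi_\chi^{ss}\co W_F \to {}^L T$ with the canonical map ${}^L T \to {}^L G$ coming from the embedding $T\hookrightarrow G$. By property (i) the parameter $\varphi_\chi^{ss}$ is the usual local Langlands parameter of the character $\chi$ of $T(F)$, and this is determined by $\chi$. Composing, we see that $\varphi_\pi^{ss}$ is forced to be the image in $\Phi(G/F)$ of the LLC-parameter for $\chi$. Since this recipe depends only on $(T,\chi)$ and on the data of properties (i), (ii), any two families satisfying (i) and (ii) must agree on $\pi$.

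The only subtlety to verify is well-definedness: the cuspidal support $(T,\chi)_G$ is defined only up to the action of the relative Weyl group $W(G,T)$, and $\pi$ may occur in different standard parabolic inductions whose inducing characters are $W(G,T)$-conjugate. However, property (ii) applied to two such inductions produces a priori two parameters, and one checks that conjugate characters yield $\widehat{G}$-conjugate parameters (because the map $({\widehat T}^{I_F})_{\Phi_F}/W(G,T)\to {}^L G/\widehat G$ from \textsection 2 is well-defined). Hence the induced parameter in $\Phi(G/F)$ is unambiguous, and the argument closes. The main (and essentially only) obstacle is formulating compatibility with parabolic induction precisely enough that it forces equality for every irreducible subquotient of $\on{Ind}_B^G(\chi)$, not just for some preferred one; but this is built into how property (ii) is meant to be interpreted, and it is exactly what makes the entire semisimplified picture tractable.
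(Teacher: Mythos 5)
Your proposal is correct and follows essentially the same path as the paper: use that for quasi-split $G$ the cuspidal support of a parahoric-spherical $\pi$ is $(T,\chi)$ with $T$ a maximal torus, apply compatibility with parabolic induction to reduce to $T$, then invoke the tori case. The paper's proof is terse (``exactly the same as the proof in \textsection{2}''), and your extra remark on $W(G,T)$-conjugacy and well-definedness of the induced map $(\widehat{T}^{I_F})_{\Phi_F}/W(G,T)\to {}^L G/\widehat G$ is a correct observation, though for a pure uniqueness statement it is not strictly necessary: fixing any one choice of $(T,\chi)$ already forces $\varphi_\pi^{ss}$ via (i) and (ii).
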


\begin{proof}
This is exactly the same as the proof in \textsection 2.   
\end{proof}

\begin{rmk}
Note that in the statement of Theorem \ref{thm_quasisplit}, we do not need to consider finite separable extensions $E/F$.
\end{rmk}

\begin{cor} \label{FSGL}
Let $G$ be a \textbf{quasi-split} connected reductive group defined over $F$ and let $\pi$ be a smooth irreducible representation of $G(F)$ with non-trivial parahoric fixed vectors. Then $\varphi^{FS}_{\pi}=\varphi^{GL}_{\pi}$.
\end{cor}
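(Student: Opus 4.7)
The plan is to simply invoke Theorem \ref{thm_quasisplit} with both correspondences $\pi \mapsto \varphi_\pi^{FS}$ and $\pi \mapsto \varphi_\pi^{GL}$ in the role of $\pi \mapsto \varphi_\pi^{ss}$. To do so I need to check that each of these two constructions satisfies the two properties (i) and (ii) listed in Theorem \ref{thm_quasisplit}, restricted to quasi-split groups over $F$. Once both correspondences are shown to lie in the class to which the uniqueness theorem applies, the conclusion $\varphi_\pi^{FS} = \varphi_\pi^{GL}$ is immediate.

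For the Fargues-Scholze correspondence, both required properties are part of the list of properties established in \cite[Theorem I.9.6]{FS21}: property (i), agreement with the usual local Langlands correspondence for tori, is item (iv) of that theorem, and property (ii), compatibility with parabolic induction, is item (viii). So nothing new is needed on that side.

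For the Genestier-Lafforgue correspondence, the analogous statements are proved in \cite{GL18}. Namely, Genestier and Lafforgue show that their construction reduces to the classical local Langlands correspondence when $G$ is a torus (this is how the parameter is set up for tori), and they prove compatibility with parabolic induction (via constant term functors on the relevant moduli spaces of shtukas). Thus both hypotheses of Theorem \ref{thm_quasisplit} are satisfied for $\pi \mapsto \varphi_\pi^{GL}$ as well.

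The main (and in fact only) subtlety I expect is making sure that the conventions in \cite{GL18} really match the conventions of the present paper: in particular, the normalization of local class field theory, the normalization of parabolic induction, and the identification of the dual group must agree on the nose, so that ``the usual local Langlands correspondence for tori'' refers to the same map in both settings. Assuming these conventions are aligned (and there is nothing substantive to check beyond bookkeeping), Theorem \ref{thm_quasisplit} applies to both families and forces $\varphi_\pi^{FS} = \varphi_\pi^{GL}$ for every parahoric-spherical $\pi$ of any quasi-split $G$ over $F$.
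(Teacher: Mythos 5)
Your proposal is correct and coincides with the paper's own proof: both invoke Theorem \ref{thm_quasisplit} after checking that $\varphi_\pi^{FS}$ and $\varphi_\pi^{GL}$ each satisfy the torus and parabolic-induction compatibilities, citing \cite[Theorem I.9.6]{FS21} and \cite[Th\'eor\`eme 0.1]{GL18} respectively. The remarks about aligning conventions are sensible but not a new idea; the paper treats this as immediate.
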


\begin{proof}
Both $\varphi^{FS}_{\pi}$ and $\varphi^{GL}_{\pi}$ satisfy the conditions in Theorem \ref{thm_quasisplit}. For $\varphi^{FS}_{\pi}$, this is \cite[Theorem I.9.6]{FS21}. For $\varphi^{GL}_{\pi}$, this is \cite[Théorème 0.1]{GL18}.
\end{proof}

Actually the results in Theorem \ref{thm_quasisplit} and Corollary \ref{FSGL} still hold when we consider principal series representations (in the sense that it is realized as an irreducible subquotient of the parabolic inductions of a character of a Cartan subgroup) instead. We record it as the following proposition.

\begin{prop}
Consider a family of maps $\pi \mapsto \varphi_{\pi}^{ss}$ from principal series representations of $G(F)$ \textup{(}for varying \textbf{quasi-split} connected reductive groups $G$ over $F$\textup{)} to semisimple local Langlands parameters. There is at most one family of correspondences which satisfies the properties below.

\noindent \textup{(i)} For $G=T$ is a torus, $\pi \mapsto \varphi_{\pi}^{ss}$  is the usual local Langlands correspondence.

\noindent \textup{(ii)} The correspondence $\pi \mapsto \varphi_{\pi}^{ss}$ is compatible with parabolic induction.

In particular, for any principal series representation $\pi$, we have $\varphi^{FS}_{\pi}=\varphi^{GL}_{\pi}$.
\end{prop}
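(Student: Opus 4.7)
The plan is to copy, essentially verbatim, the strategy used for Theorem \ref{thm_quasisplit}: reduce to maximal tori via parabolic induction and then invoke property (i). Since $G$ is quasi-split, with Borel subgroup $B=TU$, the Cartan subgroup $T$ is a maximal torus of $G$. By the very definition of principal series adopted here, such a $\pi$ is an irreducible subquotient of $\Ind_B^G \chi$ for some (not necessarily weakly unramified) character $\chi$ of $T(F)$. Property (ii) then forces $\varphi_{\pi}^{ss}$ to be the push-forward of $\varphi_{\chi}^{ss}$ along the canonical embedding ${}^L T \hookrightarrow {}^L G$, while property (i) identifies $\varphi_{\chi}^{ss}$ with the parameter furnished by the local Langlands correspondence for tori. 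These two ingredients together determine $\varphi_{\pi}^{ss}$ uniquely, which is the first assertion.

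For the in-particular clause, I would then observe that both $\pi \to \varphi_{\pi}^{FS}$ and $\pi \to \varphi_{\pi}^{GL}$ satisfy (i) and (ii): property (i) in either case is just Langlands duality for characters of tori, property (ii) for Fargues--Scholze is part of \cite[Theorem I.9.6]{FS21}, and for Genestier--Lafforgue is part of \cite[Théorème 0.1]{GL18}. The uniqueness just proved immediately yields $\varphi_{\pi}^{FS}=\varphi_{\pi}^{GL}$ for every principal series representation $\pi$.

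The only point requiring a little care is the correct formulation of property (ii): compatibility with parabolic induction must be understood as saying that \emph{every} irreducible subquotient of $\Ind_B^G \chi$ has semisimple parameter equal to the image of $\varphi_{\chi}^{ss}$ under the natural map of $L$-groups, not merely the particular subquotient of interest (in the semisimple setting, all subquotients necessarily share the same parameter). This is the sense in which compatibility is verified in both \cite{FS21} and \cite{GL18}, so no genuine obstacle arises. In contrast to Theorem \ref{thm_quasisplit}, we do not need $\chi$ to be weakly unramified; the slightly larger domain (all principal series rather than only parahoric-spherical representations) costs nothing, because the reduction to tori goes through the same parabolic induction step regardless of whether $\chi$ is weakly unramified.
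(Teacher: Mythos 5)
Your argument is exactly the paper's: reduce to the torus via compatibility with parabolic induction, invoke the local Langlands correspondence for tori, and then note that both the Fargues--Scholze and Genestier--Lafforgue constructions satisfy these two properties (by \cite[Theorem I.9.6]{FS21} and \cite[Théorème 0.1]{GL18}, respectively). Your added clarification that every irreducible subquotient of $\Ind_B^G\chi$ receives the same semisimple parameter is the correct reading of property (ii) and matches how the paper uses it, so the proposal is correct.
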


\begin{proof}
This is exactly the same as the proof in \textsection 2 and the proof in Corollary \ref{FSGL}.
\end{proof}

\section{Proof of the Main Theorem}\label{Proof_of_main_theorem}

In this section, we give a proof of the Main Theorem for $\pi \mapsto \varphi^{ss}_{\pi}$ (for $\pi \mapsto \varphi^{ss}_{\pi}(\Phi_F)$, the proof is similar). Our aim is to show that the semisimple local Langlands parameter $\varphi^{ss}_\pi$ is uniquely determined by the properties (i)-(vii). The proof consists of several reduction steps. In every subsection, we explain how one can reduce to the corresponding case. To begin with, we recall that we are supposed to consider any parahoric-spherical representation $\pi$ for any connected reductive group over $F$. 

\subsection{Any weakly unramified character of any anisotropic-mod-center group} \ 

\bigskip
This reduction follows from the argument we used several times. By the compatibility with parabolic induction (property (vi)), it suffices to consider the cuspidal support $(M,\chi)_G$ of $\pi$. Here $\chi$ is a weakly unramified charater of $M(F)$ \cite[\textsection 11.5]{Hai14}. Since $M$ is a minimal $F$-Levi subgroup of $G$, it is $F$-anisotropic mod center. Thus, it is enough to treat only the case when $G$ is an anisotropic-mod-center group and $\pi$ is a weakly unramified character.

\subsection{Any weakly unramified character of any anisotropic-mod-center group with simply-connected derived group} \ 

\bigskip
For this step, let $1\rightarrow Z \rightarrow G'\rightarrow G \rightarrow 1$ be a $z$-extension ($Z$ is an induced torus and $G'$ has simply-connected derived group). If $G$ is $F$-anisotropic mod center, then $G'$ is also $F$-anisotropic mod center. Since the Kottwitz homomorphism is functorial, $G'(F)_1$ is mapped into $G(F)_1$, thus we know that weakly unramified characters of $G(F)$ pull back to weakly unramified characters of $G'(F)$.
Now since $G'$ and $G$ share the same adjoint group, one can use the compatibility for this situation (property (iii) in the Main Theorem). To reduce the problem to $G'$, it remains to set up an injection between $L$-parameters (which are classified by $H^1(W_F,-)$) for different groups and this is the following lemma.

\begin{lem} \label{inj_lem}   Let $1\rightarrow Z \rightarrow G'\rightarrow G \rightarrow 1$ be a $z$-extension, then the natural map $H^1(W_F,\widehat{G}) \rightarrow H^1(W_F,\widehat{G'})$ is an injection.
\end{lem}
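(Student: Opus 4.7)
The plan is to dualize the $z$-extension and apply the long exact sequence of non-abelian cohomology. Since $Z$ is a central torus in $G'$, dualization yields a short exact sequence of complex algebraic groups with continuous $W_F$-action
$$
1 \to \widehat{G} \to \widehat{G'} \to \widehat{Z} \to 1,
$$
in which $\widehat{G}$ is a normal closed subgroup of $\widehat{G'}$. The associated long exact sequence of non-abelian $W_F$-cohomology reads
$$
\widehat{G'}^{W_F} \to \widehat{Z}^{W_F} \xrightarrow{\delta} H^1(W_F, \widehat{G}) \to H^1(W_F, \widehat{G'}),
$$
so that the base-point kernel of the last map coincides with $\mathrm{image}(\delta)$. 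By the standard twisting procedure for non-abelian $H^1$, full injectivity of pointed sets follows from triviality of this kernel for every twist of the sequence by a cocycle $\sigma \in Z^1(W_F, \widehat{G})$; each such twist has the same underlying algebraic groups and preserves the containment $\widehat{G'}_{\mathrm{der}} \subseteq \widehat{G}$, while the twisted $W_F$-actions on $Z(\widehat{G'})$ and on $\widehat{Z}$ coincide with the untwisted ones (since inner conjugation by $\sigma(w) \in \widehat{G}$ is trivial on the center of $\widehat{G'}$ and on the quotient $\widehat{Z}$). So it suffices to prove the single statement that $\widehat{G'}^{W_F} \to \widehat{Z}^{W_F}$ is surjective.

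For the surjectivity I plan to exploit both defining features of a $z$-extension. Since $\widehat{Z}$ is abelian, $\widehat{G'}_{\mathrm{der}}$ lies in $\widehat{G}$; and since $G'_{\mathrm{der}}$ is simply connected, $Z(\widehat{G'})$ is a connected complex torus. Together these imply that the surjection $\widehat{G'} \twoheadrightarrow \widehat{Z}$ is realized already by a surjection $Z(\widehat{G'}) \twoheadrightarrow \widehat{Z}$ of connected complex tori, and the problem reduces to establishing surjectivity on $W_F$-invariants for this map of tori.

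I expect this last step to be the main technical obstacle. The cleanest route is through cocharacter lattices: the map $Z(\widehat{G'}) \to \widehat{Z}$ is dual to the $F$-torus map $Z \to G'_{\mathrm{ab}}$ (which has finite kernel, since $Z \cap G'_{\mathrm{der}}$ is central in the simply connected group $G'_{\mathrm{der}}$), and by divisibility of $\mathbb{C}^\times$ the desired surjectivity on $W_F$-invariants is equivalent to injectivity of the induced map on coinvariant lattices $X_*(Z)_{W_F} \to X_*(G'_{\mathrm{ab}})_{W_F}$. Since $Z$ is an induced torus, $X_*(Z)$ is a permutation $W_F$-module, and chasing the long exact sequence in group homology associated to $0 \to X_*(Z) \to X_*(G'_{\mathrm{ab}}) \to X_*(G'_{\mathrm{ab}})/X_*(Z) \to 0$ yields the required injectivity. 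Alternatively, Langlands duality for tori together with the vanishing $H^1(F,Z) = 0$ (Hilbert 90 for induced tori, which gives surjectivity of $G'(F) \to G(F)$) can be used to derive the surjectivity on invariants directly from an extension-of-characters argument.
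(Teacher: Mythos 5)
Your proof has the same skeleton as the paper's: both arguments reduce to showing that $Z(\widehat{G'})^{W_F}\to\widehat{Z}^{W_F}$ is surjective, and then deduce injectivity from the non-abelian cohomology of $1\to\widehat{G}\to\widehat{G'}\to\widehat{Z}\to 1$. For the deduction you carry out the twisting argument by hand, observing that twisting by a $\widehat{G}$-valued cocycle does not change the $W_F$-actions on $Z(\widehat{G'})$ or on $\widehat{Z}$, so the surjectivity on invariants persists for every twist; the paper instead cites \cite[Proposition 39]{Ser97} for the description of the fibers of $H^1(W_F,\widehat{G})\to H^1(W_F,\widehat{G'})$ as orbits of $\widehat{Z}^{W_F}$. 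These are the same argument, and your rendition is fine. Your ``alternative'' route to the surjectivity on invariants --- from $H^1(F,Z)=0$ get $G'(F)\twoheadrightarrow G(F)$, hence $\Hom_{cont}(G(F),\mathbb{C}^{\times})\hookrightarrow\Hom_{cont}(G'(F),\mathbb{C}^{\times})$, hence by Langlands duality for quasicharacters $H^1(W_F,Z(\widehat{G}))\hookrightarrow H^1(W_F,Z(\widehat{G'}))$, hence the connecting map out of $\widehat{Z}^{W_F}$ in the abelian long exact sequence of $1\to Z(\widehat{G})\to Z(\widehat{G'})\to\widehat{Z}\to 1$ vanishes --- is exactly the paper's proof.

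Your preferred route via cocharacter lattices, however, has a gap as written. The reduction to injectivity of $X_*(Z)_{W_F}\to X_*(G'_{ab})_{W_F}$ via divisibility of $\mathbb{C}^{\times}$ is correct, but you do not say why ``chasing the long exact sequence in group homology associated to $0\to X_*(Z)\to X_*(G'_{ab})\to Q\to 0$'' yields it, and it does not directly: the obstruction to injectivity on $H_0$ is the image of the connecting map $H_1(W_F,Q)\to X_*(Z)_{W_F}$, and the permutation property of $X_*(Z)$ says nothing immediate about $H_1(W_F,Q)$ (note that $Q$ generically has torsion, since $Z\to G'_{ab}$ is only an isogeny onto its image, the kernel being the finite group $Z\cap G'_{der}$). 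The permutation hypothesis does close the gap, but by a different mechanism than the quotient sequence: it gives $\hat{H}^{-1}(W_F,X_*(Z))=0$, i.e.\ the norm map $X_*(Z)_{W_F}\to X_*(Z)^{W_F}$ is injective; combining this with the evident injectivity of $X_*(Z)^{W_F}\hookrightarrow X_*(G'_{ab})^{W_F}$ and the commuting square of norm maps then forces $X_*(Z)_{W_F}\to X_*(G'_{ab})_{W_F}$ to be injective. You should either spell that out or just fall back to the Hilbert-90/Langlands-duality argument, which is what the paper does and which avoids invoking the permutation property of $X_*(Z)$ beyond the vanishing $H^1(F,Z)=0$.
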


\begin{proof}
By Shapiro's lemma, we know that $H^1(F,Z)=0$ and this implies that the map $G'(F) \rightarrow G(F)$ is surjective, and thus the induced map $\Hom_{cont}(G(F),\mathbb{C}^{\times}) \rightarrow \Hom_{cont}(G'(F),\mathbb{C}^{\times})$ is an injection. By Langlands duality for quasicharacters (which has been used in Lemma \ref{tori_lemma}), we know that the natural map $H^1(W_F,Z(\widehat{G})) \rightarrow H^1(W_F,Z(\widehat{G'}))$ is an injection. Now from the exact sequence 
$$1\rightarrow Z \rightarrow G'\rightarrow G \rightarrow 1,$$
we get an exact sequence of $W_F$-modules (see \cite[1.8]{Ko84} for more details):
$$1 \rightarrow  Z(\widehat{G}) \rightarrow Z(\widehat{G'}) \rightarrow \widehat{Z} \rightarrow 1.$$
Since we know that $H^1(W_F,Z(\widehat{G})) \rightarrow H^1(W_F,Z(\widehat{G'}))$ is an injection, we obtain the fact that $Z(\widehat{G'})^{W_F} \rightarrow \widehat{Z}^{W_F}$ is a surjection. In other words, any $W_F$-invariant element in $\widehat{Z}$ can be lifted to a $W_F$-invariant element in the center of $\widehat{G'}$. However, from the exact sequence \cite[10.2]{Bo79}
$$1 \rightarrow  \widehat{G} \rightarrow \widehat{G'} \rightarrow \widehat{Z} \rightarrow 1$$
and general results about group cohomology, this implies that the natural map $H^1(W_F,\widehat{G}) \rightarrow H^1(W_F,\widehat{G'})$ is an injection (fibers of this map are orbits of the action of $\widehat{Z}^{W_F}$ on $H^1(W_F,\widehat{G})$ \cite[Proposition 39]{Ser97} and they are all singletons in this case).

\end{proof}

Now we are done since $G'$ is of the desired form.

\subsection{The trivial character of any anisotropic-mod-center group with simply-connected derived group} \ 

\bigskip
We want to apply the compatibility with character twist in this step. For that, we need the following lemma.
\begin{lem} \label{sc_lemma}
 Let $G$ be a connected reductive group defined over $F$. Suppose the algebraic derived group $G_{der}$ is simply connected. Then any weakly unramified character of $G(F)$ factors through $G_{ab}(F)$ \textup{(}$G_{ab}=G/G_{der}$\textup{)}.
\end{lem}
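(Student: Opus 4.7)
The plan is to reduce the lemma to two formal facts: (a) any weakly unramified character $\chi$ on $G(F)$ is trivial on the image of $G_{der}(F)\to G(F)$, and (b) the map $G(F)\to G_{ab}(F)$ is surjective with kernel precisely this image. Combined, they exhibit $\chi$ as the pullback of a well-defined character of $G_{ab}(F)$.

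For (a), I would exploit the functoriality of the Kottwitz homomorphism along the inclusion $G_{der}\hookrightarrow G$, giving the commutative square
$$
\begin{CD}
G_{der}(\nF) @>>> G(\nF) \\
@V\kappa_{G_{der}}VV @V\kappa_GVV \\
X^*(Z(\widehat{G_{der}})^{I_F}) @>>> X^*(Z(\widehat{G})^{I_F})
\end{CD}
$$
Since $G_{der}$ is simply connected, its dual $\widehat{G_{der}}$ is of adjoint type, forcing $Z(\widehat{G_{der}})=1$ and hence $\kappa_{G_{der}}\equiv 0$. Commutativity of the square then shows that the image of $G_{der}(F)$ in $G(F)$ lies in $G(F)_1=\ker(\kappa_G)\cap G(F)$. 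Since $\chi$ is trivial on $G(F)_1$ by the definition of weakly unramified, it is trivial on the image of $G_{der}(F)$.

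For (b), I would invoke Kneser's vanishing theorem (extended to positive characteristic by Bruhat--Tits and Harder): for a simply connected semisimple group $H$ over a nonarchimedean local field $F$, one has $H^1(F,H)=0$. Applied to $H=G_{der}$, the long exact sequence of Galois cohomology attached to $1\to G_{der}\to G\to G_{ab}\to 1$ yields that $G(F)\twoheadrightarrow G_{ab}(F)$, with kernel equal to the image of $G_{der}(F)$. Combining with (a), $\chi$ descends uniquely to a character of $G_{ab}(F)$.

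The only genuine content sits in step (a); this is the sole place where the simply-connectedness hypothesis enters, through the triviality of the center of the adjoint dual group. There is no real obstacle---once Kottwitz functoriality and Kneser's theorem are granted, the lemma is a two-step formal argument.
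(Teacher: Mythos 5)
Your proof is correct and follows the same two-step structure as the paper's: show $G_{der}(F)\subset G(F)_1$ and then use $H^1(F,G_{der})=0$ to get surjectivity of $G(F)\to G_{ab}(F)$. The only cosmetic difference is in the first step, where you invoke Kottwitz functoriality together with $Z(\widehat{G_{der}})=1$, while the paper instead cites the explicit description of $\kappa_G$ as factoring through $G_{ab}(\breve{F})$ when $G_{der}$ is simply connected; these are two phrasings of the same structural fact.
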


\begin{proof}
 Since $G_{der}$ is semisimple and simply-connected, we know that $H^1(F,G_{der})=0$ from \cite[REMARQUES 3.16]{BT87}, and thus the map $G(F) \rightarrow G_{ab}(F)$ is a surjection. Now it suffices to show that $G_{der}(F)\subset G(F)_1$. But in the definition of the Kottwitz homomorphism, when $G_{der}$ is simply connected, the map is given by $G(\breve{F})\rightarrow G_{ab}(\breve{F})\rightarrow X^*(\widehat{G_{ab}})_{I_F}=X^*(Z(\widehat{G}))_{I_F}$, and thus $G_{der}(F)$ must be contained in the kernel of the Kottwitz homomorphism.
\end{proof}
 
\begin{rmk}
 Note that the assumption on the simply-connectedness is necessary in Lemma \ref{sc_lemma}. When $G_{der}$ is not simply-connected, even if $G$ is $F$-anisotropic mod center, the lemma can still be false. Here is a counterexample. Let $D$ be the quaternion algebra over $F$, then $GL_{1,D}$ is an inner form $GL_{2,F}$ and $G:=PGL_{1,D}$ (the adjoint group of $GL_{1,D}$) is an inner form of $PGL_{2,F}$. One can check that $(PGL_{1,D})_{der} =PGL_{1,D}$ (check over algebraically closed fields). Suppose the lemma were true in this case, then we would have $G(F)=G_{der}(F)=G(F)_1$. Thus, via the Kottwitz homomorphism, we know that $G(F)/G(F)_1$ is isomorphic to $X^*(Z(\widehat{G}))^{\Phi_F}_{I_F}$, which implies that $X^*(Z(\widehat{G}))^{\Phi_F}_{I_F}$ is trivial. Since $G$ is an inner form of the  split group $PGL_{2,F}$, the Galois group acts trivially on $\widehat{G}$ and thus also trivially on $Z(\widehat{G})$, but then $X^*(Z(\widehat{G}))^{\Phi_F}_{I_F}=X^*(Z(\widehat{G}))=\mathbb{Z}/2\mathbb{Z}$ which is not trivial. This is a contradiction.
\end{rmk}

\begin{rmk}
  If one is only interested in unramified characters, then the assumption on the derived group is not necessary. One definition of unramified characters $\chi$ is that $\Lambda_0\subset Ker(\chi)$ where $\Lambda_0$ is the intersection of the kernels of all $F$-rational characters of $G$. Since then $G_{der}(F)$ is automatically contained in $\Lambda_0$, the result follows.
\end{rmk}
 
Now the reduction follows from the compatibility with character twist (property (ii)) and the compatibility with the usual local Langlands correspondence in the case of tori (property (i)).
 




\subsection{The trivial character of any anisotropic group of adjoint type} \ 

\bigskip
For the trivial character of any anisotropic-mod-center group with simply-connected derived group, we can reduce to the desired case by passing to its adjoint group and applying the property (iii) in the Main Theorem. Note that we do not need to set up injections as what we did in \textsection{4.2} since we are using the property (iii) in a different direction.
\subsection{The trivial character of $PGL_{1,D}$ (Conclusion of Proof)} \ 

\bigskip
For this part, we consider the classification of anisotropic groups of adjoint type over local fields. From the general theory about reductive groups of adjoint type (for example, see \cite[Theorem 24.3]{Mil17} and paragraphs after that), we know that such a group can be written as a direct product of groups of the form $Res_{E/F}(G)$ where $G$ is a connected $E$-anisotropic group of adjoint type which is geometrically almost-simple and $E/F$ is a finite separable extension. Note that this is the only place where we need to consider also finite separable extensions $E$ over $F$. Geometrically almost-simple, connected, anisotropic groups of adjoint type are classified by Bruhat and Tits in \cite{BT87}. Originally Bruhat and Tits classified all connected, simply-connected, geometrically almost-simple anisotropic groups over local fields, and these groups are of the form $SL_{1,D}$. By passing to their adjoint groups, we get the corresponding result for groups of adjoint type and they are of the form $PGL_{1,D}$. Now we can finish the last part by applying the compatibility with direct product (property (iv)) and the compatibility with Weil restriction of scalars (property (v)). For the uniqueness statement involving property (vii), one only needs to consider the exact sequence $1\rightarrow \mathbb{G}_m\rightarrow GL_1(D)\rightarrow PGL_1(D)\rightarrow 1$ and applies property (iii).

\begin{rmk} \label{weaken}
Note that all the properties mentioned in the Main Theorem have been used in the proof. However, from our proof, actually some assumptions in the Main Theorem can be weakened slightly. For example, we only need that properties (ii)-(v) hold for groups which are anisotropic mod center and we only require that property (vi) holds with respect to minimal Levi subgroups. For simplicity, we keep the assumptions in the Main theorem.
\end{rmk}


 \section{The compatibility with parahoric Satake parameters: General case}\label{Compatibility_general}
 
We start by reviewing quickly the construction of $s(\pi)$ for non-quasi-split groups and we use the same notations as in \textsection 2. More details can be found in \cite[\textsection 8,9]{Hai15}. For a non-quasi-split group $G$, we use the normalized transfer homomorphism to pass to its quasi-split inner form. Let $G^*$ be the unique quasi-split inner form of $G$ defined over $F$. There exists a pair $(G,\Psi)$ where $\Psi$ is a $\Gamma_F$-stable $G_{ad}^*(\bar{F})$-orbit of $\bar{F}$-isomorphisms $\psi : G\rightarrow G^*$. The isomorphism classes of such pairs are classified by $H^1(F,G_{ad}^*)
$. To distinguish Galois actions on $G$ and $G^*$, we use $\Gamma_F^*$, $I_F^*$ and $\Phi_F^*$ when we consider the Galois action on $G^*$. In order to formulate the transfer homomorphisms, we need to make a choice of some data, and in the end it turns out that our transfer homomorphisms do not depend on the choice we made.

Pick a maximal $F$-split torus $A$ of $G$, and let $M$ be the centralizer of $A$ in $G$ and let $P$ be an $F$-parabolic subgroup containing $M$ as a Levi factor. We also pick similar data $A^*$, $T^*$, and $B^*$ for $G^*$. Since $G^*$ is quasi-split, $T^*$ is a maximal $F$-torus and $B^*$ is an $F$-Borel subgroup. It turns out that there exits a unique standard (in the sense of containing $B^*$) $F$-parabolic subgroup $P^*\subseteq G^*$ such that $P^*$ is $G^*(\bar{F})$-conjugate to $\psi(P)$ for all $\psi\in\Psi$. We denote by $M^*$ the standard Levi factor of $P^*$. We can consider the nonempty $\Gamma_F$-stable subset $\Psi_M\subseteq \Psi$ consisting of isomorphisms which send $P$ to $P^*$ and also send $M$ to $M^*$. Now $\Psi_M$ is a nonempty $\Gamma_F$-stable $M^*_{ad}(\bar{F})$-orbit of $\bar{F}$-isomorphism $M\rightarrow M^*$. Pick any $\psi_0\in \Psi_M$ which is $F^{un}$-rational. Since $A$ is central in $M$, $\psi_0|_A$ is defined over $F$, and thus $\psi_0(A) \subseteq A^*$.

We can choose a maximal torus $\widehat{T^*}$ and a Borel subgroup $\widehat{B^*}$ of $\widehat{G^*}$ such that they are both $\Gamma^*_F$-stable. Now $\psi_0$ induces a $\Gamma_F$-equivariant map between the centers of the dual groups
$$
\widehat{\psi_0}:Z(\widehat{M})\cong Z(\widehat{M^*}) \hookrightarrow \widehat{T^*}.
$$
Furthermore, it also induces the following map (by abuse of notation, we still denote the map by $\widehat{\psi_0}$)
$$
\widehat{\psi_0}:(Z(\widehat{M}^{I_F}))_{\Phi_F}/W(G,A)\rightarrow (\widehat{T^*}^{I_F^*})_{\Phi_F^*}/W(G^*,A^*).
$$
Now given any weakly unramified character $\chi$ of $M(F)$ (recall that we can view it as an element in $(Z(\widehat{M}^{I_F}))_{\Phi_F}$), we send it to the image of $\delta^{-\frac{1}{2}}_{B^*}\widehat{\psi_0}(\delta^{\frac{1}{2}}_P\chi)$ under the map
$$
(\widehat{T^*}^{I_F^*})_{\Phi_F^*}/W(G^*,A^*)\rightarrow\, ^L\!G^*/\widehat{G^*}=\,^L\!G/\widehat{G}.
$$
Here $^L\!G^*/\widehat{G^*}$ is identified with $^L\!G/\widehat{G}$ via $\Psi$. By $\delta_{B^*}$ and $\delta_P$ we mean the usual modular characters used to define the normalized parabolic induction, and they are viewed as weakly unramified characters in our setting.

A short restatement of the above construction is that we make a suitable choice of an $\bar{F}$-isomorphism to the unique inner split form and use it to transfer the weakly unramified characters with some twist involved. Of course one needs to check that the construction is independent of the choice made above and $s(\pi) \in [\widehat{G}^{I_F}\rtimes \Phi_F]_{\textup{ss}}/\widehat{G}^{I_F}$. For those results, we refer the reader to \cite[\textsection 8,9]{Hai15} and note that the construction of $s(\pi)$ does not rely on the choice of the geometric Frobneius element $\Phi_F$.

The proof of Corollary B is basically the same as the proof of the Main Theorem if we know that the corresponding properties also hold for $s(\pi)$.

\begin{lem} \label{sat_lem}
The construction of parahoric Satake parameters $\pi \mapsto s(\pi)$ satisfies all the properties mentioned in the Main Theorem.
\end{lem}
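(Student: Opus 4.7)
The plan is to verify each of properties (i)--(vii) for the assignment $\pi \mapsto s(\pi)$ by unwinding the transfer homomorphism construction recalled above and checking functoriality of each ingredient: the Kottwitz homomorphism $\kappa_M$, the choice of $\psi_0 \in \Psi_M$, the modular-character twist $\delta^{-1/2}_{B^*}\widehat{\psi_0}(\delta^{1/2}_P \chi)$, and the passage to the Weyl group quotient. Property (i) is precisely Lemma \ref{tori_lemma}, and property (vi) is built into the very definition of $s(\pi)$, since $s(\pi)$ depends only on the cuspidal support $(M,\chi)_G$ and parabolic induction preserves this cuspidal support.

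For property (ii), I would observe that twisting $\pi$ by a weakly unramified character $\eta$ of $G_{ab}(E)$ replaces $\chi$ by $\chi \cdot (\eta|_M)$; since the transfer homomorphism and the modular-character twist are both group homomorphisms in the character variable, the claim reduces to Lemma \ref{tori_lemma} applied to the relevant map of tori. For property (iii), since $G'\to G$ induces an isomorphism of adjoint groups, the inner twist data $(\Psi,\Psi_M)$ and the choices of minimal Levi/parabolic on each side can be matched, and one checks that the induced map on $L$-groups intertwines the two transfer homomorphisms; the verification reduces to functoriality of the Kottwitz homomorphism (already used in Lemma \ref{inj_lem}) and adjoint-invariance of the modular characters. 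Properties (iv) and (v) are essentially book-keeping: all ingredients factor through direct products, and for $G = \on{Res}_{E'/E}(H)$ the identifications $G(E) = H(E')$ and ${}^L G \cong \on{Ind}_{W_{E'}}^{W_E} \widehat{H}$, together with the compatibility of Kottwitz homomorphisms under Weil restriction, transport the entire transfer construction from $H$ to $G$.

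The main obstacle is property (vii). For the split group $GL_n$, this follows from the corollary in Section \ref{Compatibility_quasisplit} combined with Lemma \ref{tori_lemma} and the known compatibility of $\pi \mapsto \varphi_\pi^{FS}$ with the usual LLC for $GL_n$ (\cite[Theorem I.9.6 (ix)]{FS21}). For a non-split inner form $G = GL_{n/d, D_E}$, the plan is to reduce to the $GL_n$ case via the Jacquet--Langlands correspondence (available in the $p$-adic setting by Badulescu). Concretely, the cuspidal support $(M,\chi)$ of a parahoric-spherical $\pi$ lives on a minimal Levi $M = (D_E^\times)^{n/d}$, whose weakly unramified characters factor through the reduced norm; one may therefore compute both $s(\pi)$ and the usual semisimple LLC parameter of $\pi$ explicitly in terms of unramified characters of $(E^\times)^{n/d}$ and verify they agree. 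The key technical point is tracking the $\delta^{\pm 1/2}$ modular-character twist through the inner twist $\psi_0$ between $GL_{n/d,D_E}$ and its quasi-split form $GL_n$: the two modular characters $\delta_P$ and $\delta_{B^*}$ differ by an explicit factor that must be shown to match the shift built into the Jacquet--Langlands correspondence on the spectral side.
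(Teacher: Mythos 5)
Your overall strategy matches the paper's: reduce each property to functoriality of the Kottwitz homomorphism, compatibility of the transfer homomorphism with the relevant operation, and tracking the modular-character twist. Property (i) is indeed Lemma~\ref{tori_lemma}, and property (iii) is argued essentially as you describe. However, three places deserve more care than you give them. First, for property (vi) in the non-quasi-split case, ``built into the definition'' is not quite enough: the transfer homomorphism for $G$ and the transfer homomorphism for its minimal Levi $M$ involve different modular-character twists, and reconciling them requires the explicit identity $\delta^{-1/2}_{B^*_{M^*}}=\delta^{-1/2}_{B^*}\,\widehat{\psi_0}(\delta^{1/2}_P)$ proved in \cite[Lemma 11.1]{Hai15}; without this the ``compatibility'' you assert is not automatic. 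Second, property (v) is far from book-keeping: the paper singles it out as the hardest of the remaining properties and gives a genuine two-case argument (totally ramified vs.\ unramified $E/F$) with explicit commutative diagrams comparing $(Z(\widehat{M})^{I_E})_{\Phi_E}$ with $(\widehat{Z(\on{Res}_{E/F}M)}^{\,I_F})_{\Phi_F}$; the unramified case additionally invokes \cite[Proposition 5.1]{Hai15}. Also, $^L(\on{Res}_{E'/E}H)\cong \on{Ind}_{W_{E'}}^{W_E}\widehat{H}$ is a slight abuse: $\widehat{\on{Res}_{E'/E}H}$ is the induced \emph{group} $\prod_{W_E/W_{E'}}\widehat{H}$ with $W_E$ permuting factors, and the care needed in matching $I_F$-coinvariants with $I_E$-coinvariants is precisely what the two cases address. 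Third, for property (vii) you plan to re-derive the comparison via Jacquet--Langlands and explicit modular-character tracking, but this is already done in \cite[\S 13.2]{Hai15}, so a citation suffices; your plan would work (it is essentially the argument there) but re-proves known material.
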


\begin{proof}
Let us keep the notations as in the above construction. For property (i), this is Lemma \ref{tori_lemma}. For property (vi) (for the proof of Corollary B, it suffices to check this with respect to minimal Levi subgroups), we know that when $G$ is quasi-split, this is exactly the definition of $s(\pi)$. In general, this follows from the formula $\delta^{-\frac{1}{2}}_{B^*_{M^*}}=\delta^{-\frac{1}{2}}_{B^*}\widehat{\psi_0}(\delta^{\frac{1}{2}}_P)$ used in \cite[Lemma 11.1]{Hai15}. For property (vii), this is \cite[13.2]{Hai15}. For property (iii), one can check that, in this case, a minimal Levi subgroup of $G'$ is getting mapped to one in $G$. Then the rest follows from the definition of parahoric Satake parameters and the fact that the supercuspidal supports of parahoric-spherical representations are of the desired forms as claimed in the first step of the proof of the Main Theorem. For all the remaining properties (again for the proof of Corollary B, it suffices to check them for groups which are anisotropic mod center), the strategies are the same. Basically, one picks the $\bar{F}$-isomorphism compatibly and checks how the twist behaves. For example, for property (iv), when we deal with $M\times M'$, we choose $\bar{F}$-isomorphisms respectively for $M$ and $M'$, then we get an $\bar{F}$-isomorphism for $M\times M'$. Since the twist term can also be written in the form of direct products, we are done with property (iv). Among all the properties, the compatibility with Weil restriction of scalars is the hardest one and we give a proof of it. Other remaining properties are easier and can be proved in a similar manner. Without loss of generality, we assume that $E/F$ is a finite Galois extension of degree $m$ and $M$ is a anisotropic-mod-center group over $E$. One can check immediately that $Res_{E/F}(M)$ is also anisotropic mod center and that $Res_{E/F}(M^*)$ is the unique quasi-split inner form of $Res_{E/F}(M)$. Let $\pi_E$ be a weakly unramified character of $M(E)$ and we denote by $\pi_F$ the corresponding character (which is also weakly unramified \cite[Lemma 4.4]{HR20}) of $Res_{E/F}(M)(F)=M(E)$. We want to show that $s(\pi_E)$ uniquely determines $s(\pi_F)$ in a way which is compatible with Weil restriction of scalars. It suffices to consider the following two cases. We can pick $\Phi_E$ and $\Phi_F$ compatibly in the following two cases since the construction does not reply on the choice of the geometric Frobenius element.
\smallskip

\textit{Case 1:} $E/F$ is totally ramified. In this case $\Phi_E=\Phi_F$. Since $I_F$ acts on $(\widehat{Z(Res_{E/F}(M)}))^{I_E} \cong \prod_{g\in Gal(E/F)} (Z(\widehat{M}))^{I_E} $ as transitive permutations on copies of $Z(\widehat{M})$, we have the following commutative diagram.

\[\begin{tikzcd}
((Z(\widehat{M}))^{I_E})_{\Phi_E} \arrow{r}{\widehat{\psi_0}} \arrow{d}{\Delta} & ((Z(\widehat{M^*})^{I_E}))_{\Phi_E} \arrow{d}{\Delta} \arrow{r} & ((\widehat{T^*})^{I_E})_{\Phi_E}\arrow{d}{\Delta}\\
((\widehat{Z(Res_{E/F}(M)}))^{I_F})_{\Phi_F}  \arrow{r}{\widehat{\psi_0^m}} &((\widehat{Z(Res_{E/F}(M^*)}))^{I_F})_{\Phi_F} \arrow{r}& ((\widehat{Z(Res_{E/F}(T^*)}))^{I_F})_{\Phi_F}
\end{tikzcd}
\]
Here $\Delta$'s are the maps induced from diagonal embeddings $$Z(\widehat{M}) \rightarrow  \widehat{Z(Res_{E/F}(M)}) \cong \prod_{g\in Gal(E/F)} Z(\widehat{M}) $$ (we use similar definitions for $\widehat{M^*}$ and $\widehat{T^*}$). Note that $\Delta$'s are isomorphisms. When we view $\pi_E$ as an element in $((Z(\widehat{M}))^{I_E})_{\Phi_E}$ via the Kottwitz homomorphism and view $\pi_F$ as an element in $((\widehat{Z(Res_{E/F}(M)}))^{I_F})_{\Phi_F}$, we know that $\Delta(\pi_E)=\pi_F$ in $((\widehat{Z(Res_{E/F}(M)}))^{I_F})_{\Phi_F}$ (this is the compatibility of the Kottwitz homomorphism with the Weil restriction of scalars and it follows from the proof of \cite[Lemma 4.4]{HR20}). Note that since the twist for $\pi_F$ can be written as a direct product of $m$ copies of the twist for $\pi_E$, we know that $s(\pi_F)=\Delta(s(\pi_E))$ where $\Delta$ is the map induced from the diagonal embedding $\widehat{M} \rightarrow \widehat{Res_{E/F}(M)} \cong \prod_{g\in Gal(E/F)} \widehat{M}$ in a natural way. Thus in this case, $s(\pi_E)$ uniquely determines $s(\pi_F)$ in a compatible way.

\medskip

\textit{Case 2:} $E/F$ is unramified. In this case $\Phi_E=\Phi_F^m$ and $I_E=I_F$. We have a similar commutative diagram.

\[\begin{tikzcd}
((Z(\widehat{M}))^{I_E})_{\Phi_E} \arrow{r}{\widehat{\psi_0}} \arrow{d}{\alpha} & ((Z(\widehat{M^*})^{I_E}))_{\Phi_E} \arrow{d}{\alpha} \arrow{r} & ((\widehat{T^*})^{I_E})_{\Phi_E}\arrow{d}{\alpha}\\
((\widehat{Z(Res_{E/F}(M)}))^{I_F})_{\Phi_F}  \arrow{r}{\widehat{\psi_0^m}} &((\widehat{Z(Res_{E/F}(M^*)}))^{I_F})_{\Phi_F} \arrow{r}& ((\widehat{Z(Res_{E/F}(T^*)}))^{I_F})_{\Phi_F}
\end{tikzcd}
\]
Here $\alpha$'s are the maps induced from the embedding $$Z(\widehat{M}) \rightarrow  \widehat{Z(Res_{E/F}(M)}) \cong \prod_{g\in Gal(E/F)} Z(\widehat{M}) $$ (again we use similar definitions for $\widehat{M^*}$ and $\widehat{T^*}$) into the component corresponding to the identity element in $Gal(E/F)$. Note that $\alpha$'s are isomorphisms. Again via the Kottwitz homomorphism, we have $s(\pi_F)=\alpha(s(\pi_E))$. With this identification, we know that $s(\pi_E)$ uniquely determines $s(\pi_F)$ in a compatible way. This finishes the proof of the compatiblity with Weil restriciton of scalars.

\end{proof}

Note that Corollary B does not follow directly from the Main Theorem for $\pi \mapsto \varphi^{ss}_{\pi}(\Phi_F)$, because a priori we do not know whether $\varphi^{FS}_{\pi}(\Phi_F) \in [\widehat{G}^{I_F}\rtimes \Phi_F]_{\textup{ss}}/\widehat{G}^{I_F}$ or not. For that, we need a result on $\varphi^{FS}_{\pi}(I_F)$. It is expected, when $\pi$ has non-trivial parahoric fixed vectors, that there is an element $\varphi$ in the $\widehat{G}$-conjugacy class of $\varphi^{FS}_{\pi}$ such that $\varphi(x)=1\rtimes x$ for all $x\in I_F$. For $GL_n$, this is known and in general it is an expected property of local Langlands correspondence. For general $G$, it is the following theorem.

\begin{thm} \label{inertia_thm}
Assume that $F$ is a $p$-adic field. Let $G$ be a connected reductive group defined over $F$ and $\pi$ be a smooth irreducible representation of $G(F)$ with non-trivial parahoric fixed vectors. Then there is an element $\varphi$ in the $\widehat{G}$-conjugacy class of $\varphi^{FS}_{\pi}$ such that $\varphi(x)=1\rtimes x$ for all $x\in I_F$.
\end{thm}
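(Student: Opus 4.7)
The plan is to mirror the reduction steps in the proof of the Main Theorem (Section \ref{Proof_of_main_theorem}), tracking the additional property that the $\widehat{G}$-conjugacy class of the parameter contains a representative trivial on $I_F$. Since the Fargues--Scholze correspondence $\pi \mapsto \varphi_{\pi}^{FS}$ satisfies all of the properties (i)--(vii) of the Main Theorem, these reductions should propagate the statement from the general case of a parahoric-spherical representation down to the trivial representation of $PGL_{1,D}$.

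First, I would apply compatibility with parabolic induction (property (vi)) to reduce to the weakly unramified character $\chi$ in the supercuspidal support of $\pi$ on a minimal $F$-Levi subgroup $M \subseteq G$, so $M$ is $F$-anisotropic mod center. The $L$-embedding ${^L\! M} \hookrightarrow {^L\! G}$ sends $1 \rtimes x$ to $1 \rtimes x$, so an inertia-trivial representative of $\varphi_{\chi}^{FS}$ produces one for $\varphi_{\pi}^{FS}$. Next, using a $z$-extension $M' \twoheadrightarrow M$ with $M'_{der}$ simply connected, the pullback $\chi'$ of $\chi$ to $M'(F)$ remains weakly unramified. By Lemma \ref{sc_lemma}, $\chi'$ factors through $M'_{ab}(F)$, exhibiting it as the twist of the trivial representation of $M'$ by a weakly unramified character of $M'_{ab}$; the associated character parameter is inertia-trivial by property (i) and Lemma \ref{tori_lemma}, so by property (ii) the problem reduces to the trivial representation of $M'$. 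Property (iii) applied to the adjoint quotient $M' \to M'_{ad}$ reduces further to the trivial representation of the anisotropic adjoint group $M'_{ad}$, which by the Bruhat--Tits classification decomposes as a product $\prod_i \mathrm{Res}_{E_i/F}(PGL_{1,D_i})$. Compatibility with direct product (property (iv)) and Weil restriction of scalars (property (v)) preserve inertia-triviality---in particular, the standard induction $\mathrm{Ind}_{W_E}^{W_F}$ carries $I_E$-trivial parameters to $I_F$-trivial parameters---so the problem reduces to the trivial representation of $PGL_{1,D}$ over some finite separable extension of $F$ and some division algebra $D$.

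For the final step, I would use the central isogeny $1 \to \mathbb{G}_m \to GL_{1,D} \to PGL_{1,D} \to 1$ together with property (iii) applied to the surjection $GL_{1,D} \to PGL_{1,D}$ (which induces the identity on adjoint groups). This yields $\varphi^{FS}_{\mathrm{triv}, GL_{1,D}} = \beta \circ \varphi^{FS}_{\mathrm{triv}, PGL_{1,D}}$ where $\beta\co {^L\! PGL_{1,D}} \hookrightarrow {^L\! GL_{1,D}}$ corresponds to the inclusion $SL_n \hookrightarrow GL_n$ on dual groups. By property (vii), $\varphi^{FS}_{\mathrm{triv}, GL_{1,D}}$ agrees with the usual Langlands parameter of the trivial representation of $GL_{1,D}$, which is the semisimplified Jacquet--Langlands transfer of the Steinberg parameter of $GL_n$ and is visibly inertia-trivial. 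The crucial observation is that $GL_n = \mathbb{G}_m \cdot SL_n$ with the central $\mathbb{G}_m$ fixed by the (trivial) $W_F$-action on $\widehat{GL_{1,D}}$, so conjugation by any $g \in GL_n$ on $SL_n \rtimes W_F \subseteq GL_n \rtimes W_F$ agrees with conjugation by its $SL_n$-component; hence the inertia-trivial representative in the $GL_n$-conjugacy class of $\beta \circ \varphi^{FS}_{\mathrm{triv}, PGL_{1,D}}$ lies in $SL_n \rtimes W_F$ and descends to an inertia-trivial representative in the $SL_n$-conjugacy class of $\varphi^{FS}_{\mathrm{triv}, PGL_{1,D}}$.

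The hard part will be the $z$-extension step: to descend inertia-triviality from $\varphi^{FS}_{\chi'}$ on $M'$ back to $\varphi^{FS}_{\chi}$ on $M$, one needs an injection on $I_F$-cohomology analogous to the injection $H^1(W_F, \widehat{M}) \hookrightarrow H^1(W_F, \widehat{M'})$ of Lemma \ref{inj_lem}. The same cohomological strategy should work, exploiting that the kernel $Z$ of the $z$-extension is a product of induced tori $\mathrm{Res}_{E/F}(\mathbb{G}_m)$, to control the obstruction arising from $Z(\widehat{M'})^{I_F} \to \widehat{Z}^{I_F}$. The remaining reduction steps are then formal consequences of the properties already verified for the Fargues--Scholze correspondence in \cite{FS21} and \cite{HKW22}.
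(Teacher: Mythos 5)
Your proposal follows the same overall strategy as the paper's proof: rerun the reduction chain from \S\ref{Proof_of_main_theorem} while tracking inertia-triviality, and you correctly isolate the one genuinely new ingredient needed, namely an analogue of Lemma \ref{inj_lem} at the level of $I_F$-cohomology. You also handle the endgame ($GL_{1,D}\to PGL_{1,D}$ plus property (vii), ultimately falling back on class field theory for $GL_n$) in more detail than the paper does, and your observation that $GL_n=\mathbb{G}_m\cdot SL_n$ lets one pass from a $GL_n$-conjugation to an $SL_n$-conjugation is exactly what makes that step work.

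The one real gap is precisely the step you flag as ``the hard part.'' You assert that ``the same cohomological strategy [as Lemma \ref{inj_lem}] should work,'' but that lemma's argument ran through Langlands duality for quasicharacters and the surjectivity of $G'(F)\to G(F)$ to get injectivity of $H^1(W_F,Z(\widehat G))\to H^1(W_F,Z(\widehat{G'}))$ --- none of which has an $I_F$-analogue. The paper's actual proof of Theorem \ref{inertia_thm} does something different (and weaker, which it notes suffices): it only shows the sequence $1\to H^1(I_F,\widehat G)\to H^1(I_F,\widehat{G'})$ is exact at the base point, which reduces to surjectivity of $Z(\widehat{G'})^{I_F}\to \widehat Z^{I_F}$. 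That surjectivity is then proved directly: since $Z$ is a product of induced tori, $\widehat Z^{I_F}$ is a product of copies of $\mathbb C^\times$; given $x\in\widehat Z^{I_F}$ one lifts to $y\in Z(\widehat{G'})$, forms $\breve y=\prod_{g\in Q}g(y)$ over the finite quotient $Q$ through which $I_F$ acts, so that $\breve y\in Z(\widehat{G'})^{I_F}$ maps to $x^{|Q|}$, and concludes by divisibility of $\mathbb C^\times$. You correctly name the obstruction map and the relevant structural fact about $Z$, but you do not supply the averaging-and-divisibility argument that actually closes the gap, and pointing to Lemma \ref{inj_lem}'s method is a red herring.
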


\begin{proof}
The proof is the same as the proof of the Main Theorem once we set up a similar fact as the injectivity in Lemma \ref{inj_lem}. Let $1\rightarrow Z \rightarrow G'\rightarrow G \rightarrow 1$ be a $z$-extension. We need to show that the sequence $1 \rightarrow H^1(I_F,\widehat{G}) \rightarrow H^1(I_F,\widehat{G'})$ is exact. Note that the exactness is weaker than the injectivity, but that is sufficient for our proof. From the exact sequence
$$1 \rightarrow  \widehat{G} \rightarrow \widehat{G'} \rightarrow \widehat{Z} \rightarrow 1,$$
it is equivalent to show that the map $\widehat{G'}^{I_F} \rightarrow \widehat{Z}^{I_F}$ is a surjection. It suffices to show that the map $Z(\widehat{G'})^{I_F} \rightarrow \widehat{Z}^{I_F}$ is a surjection. Recall that $Z$ is an induced torus, so $\widehat{Z}$ is a direct product of copies of $\mathbb{C}^{\times}$ and $I_F$ acts on $\widehat{Z}$ via permutations on those copies of $\mathbb{C}^{\times}$, and this implies that $\widehat{Z}^{I_F}$ is still a direct product of copies of $\mathbb{C}^{\times}$. Suppose that $I_F$ acts through a finite quotient $Q$ on the exact sequence $1 \rightarrow  Z(\widehat{G}) \rightarrow Z(\widehat{G'}) \rightarrow \widehat{Z} \rightarrow 1$ and $Q$ is of order $m$. 
For any $x\in \widehat{Z}^{I_F}$, we can pick $y\in Z(\widehat{G'})$ a preimage of $x$. Let $\breve{y}= \Pi_{g\in Q} \ g(y)$, then $\breve{y}\in Z(\widehat{G'})^{I_F}$ and its image in $\widehat{Z}$ is $x^m$. Since $\mathbb{C}^{\times}$ is divisible, this implies that the map $Z(\widehat{G'})^{I_F} \rightarrow \widehat{Z}^{I_F}$ is a surjection.

Note that the proof relies on the corresponding result (the case we finally reduce to) for the trivial representation of $D^\times$ which has the same Langlands parameter as the Steinberg representation of its quasi-split inner form $GL_n(F)$ (for this fact, see the proof of \cite[Lemma 13.4]{Hai15} which relies further on \cite[Definition 4.1 and Theorem 4.6]{Coh18}, \cite[\textsection 7.2]{Bad01} and \cite{Bad07}). Thus our proof ultimately follows from the corresponding result for $GL_n$, and there it comes from local class field theory.
\end{proof}

Let us go back to the proof of Corollary B. By Theorem \ref{inertia_thm}, we know that the image of $\varphi^{FS}_{\pi} \in H^1(W_F,\widehat{G})$ under the restriction map to $H^1(I_F,\widehat{G})$ is trivial. By the inflation-restriction exact sequence, we know that $\varphi^{FS}_{\pi}(\Phi_F) \in [\widehat{G}^{I_F}\rtimes \Phi_F]/\widehat{G}^{I_F}= H^1(\langle \Phi_F\rangle,\widehat{G}^{I_F})$. Combining this fact and Lemma \ref{sat_lem}, we can write down the proof of Corollary B verbatim from the proof of the Main Theorem except that we also need to set up a similar version of Lemma \ref{inj_lem}. We want to prove that, for any $z$-extension $1\rightarrow Z \rightarrow G'\rightarrow G \rightarrow 1$, the induced map $H^1(\langle \Phi_F\rangle,\widehat{G}^{I_F}) \rightarrow H^1(\langle \Phi_F\rangle,\widehat{G'}^{I_F})$ is an injection. But this follows directly from Lemma \ref{inj_lem} since $H^1(\langle \Phi_F\rangle,\widehat{G}^{I_F})$ (respectively $H^1(\langle \Phi_F\rangle,\widehat{G'}^{I_F})$) is a subset of $H^1(W_F,\widehat{G})$ (respectively $H^1(W_F,\widehat{G'})$). We are done with Corollary B. As a result, now we know that $\varphi^{FS}_{\pi}(\Phi_F) \in [\widehat{G}^{I_F}\rtimes \Phi_F]_{\textup{ss}}/\widehat{G}^{I_F}$.

\begin{rmk}
Corollary B and Theorem \ref{inertia_thm} justify the definition of semisimple local $L$-functions for parahoric-spherical representations in a recent paper \cite[Definition 4.4 and Remark 4.5]{OST22}. That definition also agrees with the definition given in \cite[Definition 9.8]{Hai05}.
\end{rmk}

Finally, let us emphasize again that the assumption that $F$ is a $p$-adic field is only needed for applying the results in \cite{HKW22}.

\end{document}